\numberwithin{equation}{section}
\numberwithin{figure}{section}
\theoremstyle{plain}
\newtheorem{theorem}{Theorem}\numberwithin{theorem}{section}
\newtheorem{lemma}{Lemma}\numberwithin{lemma}{section}
\newtheorem{proposition}{Proposition}\numberwithin{proposition}{section}
\newtheorem{corollary}{Corollary}\numberwithin{corollary}{section}
\theoremstyle{definition}
\numberwithin{definition}{section}
\theoremstyle{plain}
\newtheorem{remark}{Remark}\numberwithin{remark}{section}
\theoremstyle{plain}
\numberwithin{example}{section}
\newcommand{\R}{\mathbb{R}}
\newcommand{\fc}{{\mathfrak{F}}_c}
\newcommand{\fs}{{\mathfrak{F}}_s}
\newcommand{\F}{{}_1F_2}
\newcommand{\bH}{\mathbf{H}}
\title{The zeros of certain Fourier transforms:\\ Improvements of P\'olya's results}
\author{Yong-Kum Cho\footnote{ykcho@cau.ac.kr.
Department of Mathematics, College of Natural Sciences, Chung-Ang University,
84 Heukseok-Ro, Dongjak-Gu, Seoul 06974, Korea.}
\and Young Woong Park\footnote{ywpark1839@gmail.com. Department of Mathematics, College of Natural Sciences, Chung-Ang University,
84 Heukseok-Ro, Dongjak-Gu, Seoul 06974, Korea.}}
\date{}
\begin{document}
\maketitle

\textsc{Abstract.} {\small As for the Fourier transforms of positive and integrable functions
supported in the unit interval, we make a list of improvements for P\'olya's results
on the distribution of their positive zeros and give new sufficient conditions under which those zeros
are simple and regularly distributed.
As an application, we take the two-parameter family of
beta probability density functions defined by
\begin{equation*}
f(t)= \frac{1}{B(\alpha, \beta)}\, (1-t)^{\alpha-1} t^{\beta-1},\quad 0<t<1,
\end{equation*}
where $\,\alpha>0,\,\beta>0,$ and specify the distribution of zeros of the
associated Fourier transforms for some region of $(\alpha, \beta)$ in the first quadrant
which turns out to be much larger than the region where P\'olya's results are applicable.
}

\medskip

\textsc{Keywords.} {\small Distribution of zeros, Entire functions, Fourier transforms,}

{\small Partial fractions, Positivity, Wronskian inequalities.}

\medskip

\textsc{2020 Mathematics Subject Classification.} {\small  30C15, 33C20, 42A38.}

\section{Introduction}

In his fundamental work \cite{P} published in 1918, G. P\'olya, in collaboration with A. Hurwitz,
investigated the zeros of Fourier transforms of type
\begin{equation}\label{P1}
\left\{\begin{aligned}
&{U(x) =\int_0^1 f(t)\cos xt\, dt,}\\
&{V(x)  =\int_0^1 f(t) \sin xt\, dt}\end{aligned}\right.\qquad (x>0),
\end{equation}
where $f(t)$ is assumed to be positive and integrable.
In a simplified version, P\'olya's main results may be summarized as follows.

\begin{itemize}
\item[{}] {\bf{Theorem A.}}
Let $f(t)$ be a positive increasing function for $\,0<t<1\,$
such that it is in the general case\footnote{
$f(t)$ is said to be in the exceptional case
if there exist a finite number of rational points, say, $\,0 = r_0<r_1<\cdots<r_N=1,$ such that
$f(t)$ is constant in each of the open intervals $\,\big(r_{k-1},\,r_k\big),\, k=1,\cdots,N.$ Otherwise, $f(t)$ is said to be
in the general case.} and the integral $\,\int_0^1 f(t) dt\,$ exists.

\begin{itemize}
\item[(A1)] $\,U(x)>0\,$ for $\,0<x\le \pi/2,$ $U(x)$
has one and only one simple zero in each of the intervals
$\,\big((k-1/2)\pi,\,(k+1/2)\pi\big),\, k=1,2,\cdots,$
and no positive zeros elsewhere. If, in addition, $f(t)$ is convex
and the right derivative $f'(t)$ is in the general case, then $U(x)$ has a unique simple zero in each of the intervals
$\,\big((k-1/2)\pi,\,k\pi\big).$

\item[(A2)] $\,V(x)>0\,$ for $\,0<x\le\pi,$ $V(x)$
has one and only one simple zero in each of the intervals
$\,\big(k\pi,\,(k+1)\pi\big),\,k=1,2,\cdots,$
and no positive zeros elsewhere. If, in addition, $f(t)$ is convex and has the limiting value
$\,f(0+) = \lim_{t\to 0+} f(t) = 0,$ then $V(x)$ has a unique simple zero in each of the intervals
$\,\big(k\pi, \,(k+1/2)\pi\big).$
\end{itemize}
\end{itemize}

\begin{itemize}
\item[{}]{\bf Theorem B.}
Let $f(t)$ be a positive, decreasing and concave function for $\,0<t<1\,$
such that $-f'(t)$ is in the general case and the integral $\,\int_0^1 f(t) dt\,$ exists.
Then $\,U(x)>0\,$ for $\,0<x\le\pi,$ $U(x)$ has one and only one simple zero
in each of the intervals
$\,\big(k\pi,\,(k+1)\pi\big),\,k=1,2,\cdots,$
and no positive zeros elsewhere.
\end{itemize}

In a more extensive setting, P\'olya also investigated the case when Fourier transforms never vanish
and obtained the following results (see also \cite{T}).

\begin{itemize}
\item[{}]{\bf Theorem C.}
If $f(t)$ is a positive, continuous and decreasing function for
$\,0<t<\infty\,$ such that the integral $\,\int_0^\infty f(t) dt\,$ exists, then
\begin{equation*}
\left\{\begin{aligned}
&{\int_0^\infty f(t)\sin xt\,dt>0,}\\
&{\int_0^\infty f(t)\cos xt\,dt>0,}\end{aligned}\right.\qquad (x>0),
\end{equation*}
where the second inequality holds true under the additional assumption that $f(t)$ is convex
for $\,0<t<\infty.$
\end{itemize}

Due to explicit and precise descriptions on the nature of zeros under such simple conditions,
P\'olya's results have far-reaching applications in various fields of mathematical sciences
which deal with oscillatory phenomena by means of Fourier transforms of type \eqref{B1} or its variants.
Nevertheless, as will be illustrated with numerous examples in the subsequent developments,
there still leave much room for improvement. A notable drawback is that none of P\'olya's results
is applicable when the density function is not monotone.

In this paper we aim at making a list of improvements which yield sharper bounds of zeros
under certain circumstances. In particular, we shall prove that
the monotonicity or convexity condition on $f(t)$ could be replaced by the positivity of one of Fourier transforms
of $f(1-t)$ or both in such a way that P\'olya's results or its analogues continue to hold true.

As the main application of our results, we shall consider the two-parameter family of
beta probability density functions defined by
\begin{equation}\label{B0}
f(t)= \frac{1}{B(\alpha, \beta)}\, (1-t)^{\alpha-1} t^{\beta-1},\quad 0<t<1,
\end{equation}
where $B(\alpha, \beta)$ denotes the beta function for $\,\alpha>0, \,\beta>0.$
By making use of the known positivity results, we shall specify the distribution of zeros of the associated Fourier transforms
for certain region of $(\alpha, \beta)$ which turns out to be much larger
than the region of monotonicity.

For the sake of convenience, we shall often use the notation
\begin{equation}\label{P1}
\left\{\begin{aligned}
&{\fc[f(t)](x) =\int_0^1 f(t)\cos xt\, dt,}\\
&{\fs[f(t)](x) =\int_0^1 f(t) \sin xt\, dt.}\end{aligned}\right.
\end{equation}
While there are a great deal of literatures related with P\'olya's work,
we refer to the recent survey paper \cite{DR} of Dimitrov and Rusev which not only contains
relevant references but also gives outlines of the paper \cite{P} and indicates connections with other developments.

\section{Positivity}
A consequence of Theorem C is that
\emph{if $f(t)$ is a positive, continuous and decreasing function for $\,0<t<1\,$ such that
the integral $\int_0^1 f(t) dt$ exists, then its Fourier sine transform $V(x)$ is positive for all $\,x>0.$}
By a modification of Lommel's arguments for the existence of zeros of Bessel functions (\cite[15.2]{Wa}),
it is possible to remove the integrability condition as well as continuity
which may be too restrictive to include the general case of interest.

\begin{theorem}\label{theorem1}
If $f(t)$ is a positive decreasing function for $\,0<t<1\,$ such that it is in the general case, then
$\,V(x)>0\,$ for all $\,x>0.$
\end{theorem}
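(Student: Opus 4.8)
The plan is to fix $x>0$ and reduce the claim to a statement about half-period ``humps.'' Substituting $u=xt$ gives $V(x)=\tfrac1x\int_0^x g(u)\sin u\,du$ with $g(u)=f(u/x)$, which is again positive and decreasing on $(0,x)$, so it suffices to prove $\int_0^x g(u)\sin u\,du>0$. Setting $N=\lfloor x/\pi\rfloor$, I would split this into the complete humps $J_k=\int_0^\pi g(k\pi+s)\sin s\,ds$ for $0\le k\le N-1$ and a trailing piece $R=\int_{N\pi}^x g(u)\sin u\,du$. Since $\sin s\ge0$ on $(0,\pi)$ and $g>0$, each $J_k>0$, and since $g$ decreases we have $g(k\pi+s)\ge g((k+1)\pi+s)$ pointwise, whence $J_0\ge J_1\ge\cdots\ge0$; moreover $\int_{k\pi}^{(k+1)\pi}g\sin u\,du=(-1)^kJ_k$, so the bulk of the integral is the alternating sum $\sum_{k=0}^{N-1}(-1)^kJ_k$. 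This is the Lommel-type skeleton, and the integrability hypothesis is not needed here because the factor $\sin u$ regularizes any singularity of $f$ at the origin (the convergence of $J_0$ being governed by the integrability of $t\,f(t)$ near $0$ rather than by that of $f$ itself).

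A short analysis of where $x$ falls then disposes of every case but one. For $0<x\le\pi$ the integrand is $\ge0$ and positive on a set of positive measure, so $V(x)>0$ at once. If $x\in\big(2m\pi,(2m+1)\pi\big)$ with $m\ge1$, the trailing piece $R$ is strictly positive while the preceding sum is $\ge0$ after pairing terms as $(J_{2j}-J_{2j+1})\ge0$. If $x=(2m+1)\pi$, the alternating sum terminates in an unpaired $+J_{2m}>0$. If $x\in\big((2m+1)\pi,(2m+2)\pi\big)$, I would instead fold the last complete hump together with $R$ and estimate $\int_{2m\pi}^x g(u)\sin u\,du$ directly: writing $B=x-2m\pi\in(\pi,2\pi)$ and comparing $g(2m\pi+s)$ with $g((2m+1)\pi+s)$, monotonicity of $g$ and the fact that $B-\pi<\pi$ force this combined tail to be strictly positive, the earlier sum again being $\ge0$. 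In all of these cases strict positivity is automatic and the general-case hypothesis is never invoked.

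The single delicate case is $x=2m\pi$, where the integral collapses to the bare nonnegative sum $\sum_{j=0}^{m-1}(J_{2j}-J_{2j+1})$ with no surplus term to spare. Here $V(2m\pi)=0$ would force every pair to vanish, i.e. $g(2j\pi+s)=g((2j+1)\pi+s)$ for almost every $s\in(0,\pi)$ and each $j$. The crucial rigidity is that a decreasing $g$ satisfying $g(s)=g(\pi+s)$ a.e. on $(0,\pi)$ must be constant on $(0,2\pi)$, via the sandwich $g(s_1)\ge g(s_2)\ge g(s_1+\pi)=g(s_1)$ for $s_1<s_2$ in $(0,\pi)$; applied blockwise this makes $g$ constant on each $\big(2j\pi,(2j+2)\pi\big)$, hence makes $f$ constant on each $\big(j/m,(j+1)/m\big)$ --- a step function with rational breakpoints, which is exactly the exceptional case. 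The general-case hypothesis therefore excludes this scenario and yields $V(2m\pi)>0$, completing the proof. I expect this last step --- extracting the precise rational step-function structure from the vanishing of the alternating sum and matching it against the definition of the exceptional case --- to be the main obstacle, with everything else being essentially bookkeeping.
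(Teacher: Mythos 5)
Your proposal is correct and follows essentially the same route as the paper's own proof: the same rescaling to an integral of $g\sin u$ over $(0,x)$, the same decomposition into half-period humps paired via monotonicity, with the general-case hypothesis invoked exactly where the alternating sum could degenerate. The only cosmetic difference is that you organize the cases modulo $2\pi$ rather than by the parity of $\lfloor x/\pi\rfloor$, and you spell out the rigidity argument at $x=2m\pi$ (forcing $f$ to be a rational step function) in more detail than the paper, which merely asserts it.
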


\begin{proof}
On writing $\, V(x) = \pi/x \cdot S(x),$ where
$$S(x) = \int_0^{x/\pi} f\left(\frac{\pi t}{x}\right)\sin \pi t\,dt,$$
it suffices to prove the strict positivity of $S(x)$ for $\,x>0.$ Clearly,
$\,S(x)>0\,$ for $\,0<x\le\pi.$ For a positive integer $m$ and $\,0\le\theta\le 1,$ put
$$ x= (m+\theta)\pi, \quad \phi(t) = f\left(\frac{t}{m+\theta}\right).$$
By decomposing the integral and changing variables, we have
\begin{align}\label{S1}
S(x) &= \left(\sum_{k=1}^m \int_{k-1}^k + \int_m^{m+\theta}\right) \phi(t) \sin \pi t\,dt\nonumber\\
&= \sum_{k=1}^m (-1)^{k+1} \int_0^1 \phi(k-1+t) \sin \pi t\,dt\nonumber\\
&\qquad + (-1)^m \int_0^\theta \phi(m+t) \sin \pi t\,dt.
\end{align}

\begin{itemize}
\item[(a)] If $\, m=1,$ we write \eqref{S1} into the form
\begin{align*}
S(x) = \int_0^\theta \big[\phi(t) - \phi(1+t)\big] \sin \pi t\,dt + \int_\theta^1 \phi(t) \sin \pi t\,dt.
\end{align*}
Since $\phi(t)$ is positive, deceasing on $(0, 1)$ and does not belong to the exceptional case,
it implies that $\,S(x)>0.$

\item[(b)] If $m$ is odd with $\,m\ge 3,$ we put \eqref{S1} into the form
\begin{align*}
S(x) &= \int_0^1 K(\phi;t)\sin \pi t\,dt  +\int_\theta^1\phi(m-1+t)\sin \pi t\,dt\\
&\qquad +\int_0^\theta  \big[\phi(m-1+t)-\phi(m+t)\big]\sin \pi t\,dt,
\end{align*}
where $K(\phi;t)$ denotes the kernel
$$ K(\phi;t) = \big[\phi(t) - \phi(1+t)\big] + \cdots + \big[\phi(m-3+t)-\phi(m-2+t)\big],$$
whence $\,S(x)>0\,$ due to the same reasons as above.

\item[(c)] If $m$ is even, we write \eqref{S1} as
\begin{align*}
S(x) &= \int_0^1 L(\phi;t)\sin \pi t\,dt  +\int_0^\theta  \phi(m+t)\sin \pi t\,dt,
\end{align*}
where $L(\phi;t)$ denotes the kernel
$$ L(\phi;t) = \big[\phi(t) - \phi(1+t)\big] + \cdots + \big[\phi(m-2+t)-\phi(m-1+t)\big],$$
which clearly shows that $\,S(x)>0.$
\end{itemize}

In summary, it is verified that $\,S(x)>0\,$ for $\,m\pi\le x\le (m+1)\pi,$ which combined with
positivity for $\,0<x\le\pi\,$ proves the theorem.
\end{proof}

\smallskip

\begin{remark}
An inspection on the proof reveals that $\,V(x)\ge 0\,$ for all $\,x>0\,$
even when $f(t)$ belongs to the exceptional case.
\end{remark}

\smallskip

An application of Theorem \ref{theorem1} yields
\begin{align}\label{G1}
\int_0^x t^{-\alpha}\sin t\,dt &= x^{1-\alpha}\int_0^1 t^{-\alpha}\sin xt\,dt\nonumber\\
&=\frac{x^{2-\alpha}}{2-\alpha}\cdot \F\left[\begin{array}{c}
(2-\alpha)/2\\ 3/2, \,(4-\alpha)/2\end{array}\biggr| - \frac{x^2}{4}\right]>0
\end{align}
for all $\,x>0\,$ when $\,0<\alpha<2.$ We note that
$\,f(t) = t^{-\alpha}\,$ is positive and decreasing for $\,0<t<1\,$
but integrable only for $\,0<\alpha<1.$
We refer to \cite[(5.16)]{CY} for a different proof for the positivity of the above generalized sine integral
on the basis of Gasper's sums of squares method \cite{G}.

\begin{corollary}\label{corollary1}
If $f(t)$ takes the form
$$ f(t) =\int_t^1 g(s) ds\qquad(0<t<1),$$
where $g(s)$ is a positive, continuous and decreasing function for $\,0<s<1\,$ such that it is in the general case,
then $\,U(x)>0\,$ for all $\,x>0.$
In particular, if $f(t)$ is a positive, decreasing and convex function for $\,0<t<1\,$ such that
it has the limiting value $\,f(1-) = \lim_{t\to 1-} f(t)=0\,$ and $f'(t)$ is in the general case, then
$\,U(x)>0\,$ for all $\,x>0.$
\end{corollary}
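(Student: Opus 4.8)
The plan is to reduce the cosine transform of $f$ to the sine transform of $g$ and then invoke Theorem~\ref{theorem1}. The representation $f(t)=\int_t^1 g(s)\,ds$ is tailor-made for integration by parts: it forces $f(1-)=0$, while the antiderivative $\sin xt/x$ of $\cos xt$ vanishes at $t=0$, so that both boundary contributions disappear. Since $g$ is continuous we have $f'(t)=-g(t)$ classically, and hence
\begin{equation*}
U(x)=\int_0^1 f(t)\cos xt\,dt=-\frac1x\int_0^1 f'(t)\sin xt\,dt=\frac1x\int_0^1 g(t)\sin xt\,dt=\frac1x\,\fs[g(t)](x).
\end{equation*}
Now $g$ is itself positive, decreasing and in the general case, so Theorem~\ref{theorem1} applies verbatim to $g$ and gives $\fs[g(t)](x)>0$ for every $x>0$; multiplying by $1/x>0$ yields $U(x)>0$ and settles the first assertion. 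For $U$ to be defined I would record, via Fubini, that $\int_0^1 f(t)\,dt=\int_0^1 s\,g(s)\,ds$, so that integrability of $f$ amounts to integrability of $s\,g(s)$ near the origin, consistent with the standing hypothesis that the densities be integrable.

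For the ``in particular'' clause I would set $g=-f'$ and check that it satisfies the hypotheses of the first part. Convexity of $f$ makes $f'$ increasing, hence $g$ decreasing; monotonicity of $f$ gives $g\ge 0$; and this becomes strict positivity once $f(1-)=0$ and $f>0$ are taken into account, since $f'(t_0)=0$ would force $f'\equiv 0$, and thus $f$ constant, on $[t_0,1)$, contradicting $f(1-)=0<f(t_0)$. The fundamental theorem of calculus, together with $f(1-)=0$, recovers $f(t)=\int_t^1 g(s)\,ds$, and the assumed general-case property of $f'$ transfers directly to $g$, since $f'$ is piecewise constant precisely when $g$ is.

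The step I expect to require the most care is regularity rather than positivity. A convex $f$ need only be differentiable off a countable set, so $g=-f'$ may fail to be continuous and the first part is not literally applicable. I would circumvent this by observing that a convex function is locally Lipschitz, hence absolutely continuous on $(0,1)$, so $f(t)=\int_t^1 g(s)\,ds$ holds with $g=-f'$ monotone and the integration-by-parts identity above remains valid for absolutely continuous $f$ with $f'\in L^1$. Since Theorem~\ref{theorem1} itself demands only that its density be positive, decreasing and in the general case---and never continuity---the conclusion $\fs[g(t)](x)>0$, and therefore $U(x)>0$, goes through unchanged.
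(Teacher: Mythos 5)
Your proof is correct and takes essentially the same route as the paper: integration by parts (equivalently Fubini) reduces $U(x)$ to $\tfrac1x\,\fs[g(t)](x)$, and Theorem~\ref{theorem1} applied to $g$ (respectively to $g=-f'$, the right derivative) gives the positivity. Your additional verifications --- strict positivity of $-f'$, the Fubini computation of $\int_0^1 f(t)\,dt$, and the observation that Theorem~\ref{theorem1} never needs continuity so the possible discontinuities of the right derivative of a convex function are harmless --- are details the paper leaves implicit, not a different argument.
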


The proof of the first part is immediate due to
$$\fc\left[\int_t^1 g(s) ds\right](x) = \frac 1x \int_0^1 g(t) \sin xt\,dt,$$
a simple consequence of integrating by parts. The second part follows from the first part by
taking $\,g(t) = f'(t),$ the right derivative of $f(t)$.

It should be emphasized that the boundary limit condition $\,f(1-) =0\,$ is not necessary
for the positivity of $U(x)$. For example, as a special case of Askey-Szeg\"o problem,
it is shown in \cite[Theorem 4.5]{CCY} that
\begin{align}\label{G2}
\int_0^x t^{-\alpha}\cos t\,dt &= x^{1-\alpha}\int_0^1 t^{-\alpha}\cos xt\,dt\nonumber\\
&=\frac{x^{1-\alpha}}{1-\alpha}\cdot \F\left[\begin{array}{c}
(1-\alpha)/2\\ 1/2, \,(3-\alpha)/2\end{array}\biggr| - \frac{x^2}{4}\right]>0
\end{align}
for all $\,x>0\,$ when $\,1/3\le\alpha<1\,$ for which the function $\,f(t) = t^{-\alpha}\,$
is positive, decreasing and convex for $\,0<t<1\,$ but $\,f(1-) =1.$

If $f(t)$ has bounded second derivative, however, then the condition turns out to be
necessary. In fact, an integration by parts gives
\begin{align*}
U(x) &= f(1-)\,\frac{\sin x}{x} +\frac{1}{x}\int_0^1 f'(t) \sin xt\,dt\\
&=  f(1-)\,\frac{\sin x}{x} + O\left(\frac{1}{x^2}\right)\quad\text{as}\quad x\to \infty,
\end{align*}
due to the Riemann-Lebesgue lemma, so that $U(x)$ must have an infinity of
zeros for sufficiently large $x$ as long as $\,f(1-)>0.$

\section{Method of partial fractions}
This section aims to revisit the method of partial fractions used decisively
in P\'olya's investigations for the sake of subsequent applications.

For any function $f(t)$ defined for $\,0<t<1,$ if the integral $\int_0^1 |f(t)|dt$ converges,
then it is evident that Fourier transforms
\begin{equation}
\left\{\begin{aligned}
&{U(z) =\int_0^1 f(t)\cos zt\, dt,}\\
&{V(z)  =\int_0^1 f(t) \sin zt\, dt}\end{aligned}\right.\qquad (z\in\mathbb{C})
\end{equation}
are entire functions. In consideration of the Fourier cosine series
$$f(t) \,\sim\,\frac{a_0}{2} + \sum_{k=1}^\infty a_k \cos k\pi t,\quad a_k = 2 U(k\pi),$$
if it is assumed, for instance, that the series converges to $f(t)$ uniformly on every bounded subinterval
of $(0, 1)$, then termwise integrations lead to
\begin{equation}\label{PE1}
\frac{\,U(z)\,}{\,\sin z\,} = \frac{U(0)}{z} +
\sum_{k=1}^\infty (-1)^k U(k\pi)\left(\frac{1}{z-k\pi} + \frac{1}{z+ k\pi}\right),
\end{equation}
the partial fraction formula discovered by Hurwitz \cite{P}. By using this formula,
Hurwitz proved that if the Fourier coefficients $(a_k)$ alternate in sign with $\,a_0>0,$
then $U(z)$ has only real simple zeros and each of the intervals
$\,\big(k\pi,\,(k+1)\pi\big)\,$ contains exactly one zero, where $\, k=1,2,\cdots.$

From a complex analysis point of view, P\'olya proved that
\eqref{PE1} remains valid under the sole condition $\,\int_0^1 |f(t)|dt<\infty.$
In fact, he noticed that the residue at the simple pole $k\pi$ of $\,U(z)/\sin z\,$ is given by
$$\lim_{z\to k\pi}\frac{(z-k\pi) U(z)}{\sin z} = (-1)^k U(k\pi),\quad k\in\mathbb{Z},$$
and hence the difference between $\,U(z)/\sin z\,$ and the series on the right side of \eqref{PE1}
is an entire function. By expressing $\,U(z)/\sin z\,$ as a contour integral and
exploiting periodicity of $\sin z$, he verified that the difference is bounded in $\mathbb{C}$,
whence it reduces to a constant according to Liouville's theorem. Since
the difference is an odd function, the constant must be zero,
which completes P\'olya's proof for Hurwitz's formula \eqref{PE1}.

In a similar manner, P\'olya obtained two additional formulas
\begin{align}
\frac{U(z)}{z\cos z} &= \frac{U(0)}{z} + \sum_{k=1}^\infty\frac{(-1)^k U\left[(k-1/2)\pi\right]}{(k-1/2)\pi}\nonumber\\
&\qquad\qquad\quad\times\left[\frac{1}{z-(k-1/2)\pi} + \frac{1}{z+ (k-1/2)\pi}\right],\label{PE2}\\
\frac{V(z)}{z\sin z} &= \frac{V'(0)}{z} + \sum_{k=1}^\infty \frac{(-1)^k V(k\pi)}{k\pi}
\left(\frac{1}{z-k\pi} + \frac{1}{z+ k\pi}\right),\label{PE3}
\end{align}
both of which are valid under the condition $\,\int_0^1 |f(t)|\,dt<\infty.\,$

When the coefficients for $\,k\ge 1\,$ keep the same sign,
these partial fraction expansions can be used in various ways to determine the existence and nature
of zeros. To explain the idea of Hurwitz and P\'olya briefly, note that all of the partial sums
are rational functions of the form
$$S_N(z) = \frac{A_0}{z} + \sum_{k=1}^N A_k\left(\frac{1}{z-a_k} + \frac{1}{z+ a_k}\right),$$
where $\,0=a_0<a_1<a_2<\cdots <a_N\,$ and $A_k$ coincides with the residue at the pole $a_k$.
Suppose, for instance, that $\,A_0>0\,$ and $\,A_k>0\,$ for all $\,k\ge 1$.
Then it is elementary to see that $S_N(x)$ as a function of $\,x>0\,$ decreases
strictly from $+\infty$ to $-\infty$ on $\,\big(a_{k-1}, \,a_k\big),\,k=1,\cdots, N,$
so that it has a unique simple zero in each of these intervals.
As the possibility that the zeros of $S_N(z)$ move towards
end-points $a_k$'s are obviously excluded, one may conclude by letting $\,N\to\infty\,$
that the corresponding Fourier transform has only real simple zeros
whose positive zeros are distributed in the same way as described.

Our analysis will be based on Wronskian inequalities. To be more specific,
let us denote the Wronskian determinant of $f(z), g(z)$ by
$$ W[f(z),\, g(z)] = f(z)g'(z)-f'(z) g(z).$$
As to Hurwitz's formula \eqref{PE1}, if we multiply by $z$ and differentiate,
where differentiating termwise is justified due to the uniform convergence
of the derived series on every compact subset of $\,\mathbb{C}\setminus\left\{\pm k\pi : k\in\mathbb{N}\right\},$ we obtain
\begin{equation}\label{PW1}
W\left[U(z), \,\frac{\sin z}{z}\right] = \frac{\,4\sin^2 z\,}{z}
\sum_{k=1}^\infty \frac{\,(-1)^k U(k\pi)\, k^2\pi^2\,}{\left(z^2- k^2\pi^2\right)^2}.
\end{equation}
Since the singularities at $\,z =0\,$ or $\, z = k\pi,\,k=\pm 1, \pm 2, \cdots\,$ are easily seen to be removable,
this formula remains valid for all $\,z\in\mathbb{C}.\,$

Proceeding in the same way, we obtain from \eqref{PE2}, \eqref{PE3}
\begin{equation}\label{PW2}
W\big[U(z),\, \cos z\big] = 4z \cos^2 z\sum_{k=1}^\infty \frac{\,(-1)^k U\big[(k-1/2)\pi)\big]\, (k-1/2)\pi\,}
{\big[z^2- (k-1/2)^2 \pi^2\big]^2},\end{equation}
\begin{equation}\label{PW3}
W\big[V(z),\, \sin z\big] =4z \sin^2 z \sum_{k=1}^\infty \frac{\,(-1)^k V(k\pi)\,k\pi\,}
{\left(z^2- k^2\pi^2\right)^2},
\end{equation}
both of which are valid for all $\,z\in\mathbb{C}.\,$

For a smooth oscillatory function $f(x)$ defined on an open interval $I$, it is known
in Sturm's theory of oscillations, or readily verified otherwise,
that if the Wronskian $W[f(x), g(x)]$ keeps constant sign on $I$, then $g$
has an infinity of zeros which are simple and interlaced with the zeros of $f(x)$.
Due to our Wronskian formulas \eqref{PW1}, \eqref{PW2}, \eqref{PW3}, it is thus immediate to obtain
the following set of criteria on the nature and distribution of zeros
which strengthen to some extent the original ones implicitly stated in \cite{P}.

\begin{theorem}\label{theoremP}{\rm{(P\'oly's criteria)}}
Let $f(t)$ be a positive function for $\,0<t<1\,$ such that it is in the general case and the integral
$\int_0^1 f(t) dt$ exists.

\begin{itemize}
\item[\rm(P1)]
If $\,(-1)^k U\big[(k-1/2)\pi\big]<0\,$ for all $k$, then
$$W\big[\cos x,\, U(x)\big]>0\quad\text{for all} \quad x>0$$
and hence $U(x)$ has an infinity of positive zeros which are all simple and interlaced with
the zeros of $\cos x$ in such a way that $U(x)$ has a unique simple zero in each of the intervals
$$\,\big((k-1/2)\pi,\,(k+1/2)\pi\big),\quad k=1,2,\cdots,\,$$
and no positive zeros elsewhere.

\item[\rm(P2)]
If $\,(-1)^k U(k\pi)>0\,$ for all $k$, then
$$W\left[U(x),\,\frac{\sin x}{x}\right]>0\quad\text{for all} \quad x>0$$
and hence $U(x)$ has an infinity of positive zeros which are all simple and interlaced with
the zeros of $\sin x$ in such a way that $U(x)$ has a unique simple zero in each of the intervals
$$\,\big(\pi/2,\,\pi\big),\,\,\,\big(k\pi,\,(k+1)\pi\big),\quad k=1,2,\cdots,\,$$
and no positive zeros elsewhere.

\item[\rm(P3)]
If $\,(-1)^k V(k\pi)<0\,$ for all $k$, then
$$W\big[\sin x,\, V(x)\big]>0\quad\text{for all} \quad x>0$$
and hence $V(x)$ has an infinity of positive zeros which are all simple and interlaced with
the zeros of $\sin x$ in such a way that $V(x)$ has a unique simple zero in each of the intervals
$$\,\big(k\pi,\,(k+1)\pi\big),\quad k=1,2,\cdots,\,$$
and no positive zeros elsewhere.
\end{itemize}
\end{theorem}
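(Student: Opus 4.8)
The plan is to treat the three parts uniformly: each follows from the corresponding Wronskian identity \eqref{PW2}, \eqref{PW1}, \eqref{PW3} combined with the Sturm interlacing principle recalled above, so that the only real work is a sign count in the series together with the correct placement of the first zero. First I would read off the sign of each Wronskian. In \eqref{PW2} the prefactor $4x\cos^2 x$ and the factors $(k-1/2)\pi$ and $[x^2-(k-1/2)^2\pi^2]^2$ are all nonnegative for $x>0$, so the hypothesis $(-1)^k U[(k-1/2)\pi]<0$ of (P1) forces every summand to be negative; hence $W[U(x),\cos x]<0$, i.e. $W[\cos x,U(x)]>0$ on $(0,\infty)$. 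The identical bookkeeping applied to \eqref{PW1} (prefactor $4\sin^2 x/x$) under $(-1)^k U(k\pi)>0$, and to \eqref{PW3} (prefactor $4x\sin^2 x$) under $(-1)^k V(k\pi)<0$, yields $W[U(x),\sin x/x]>0$ and $W[\sin x,V(x)]>0$ respectively. In each case the displayed hypothesis simultaneously guarantees that $U$ (resp. $V$) does not vanish at the zeros of the reference function, so the interlacing to come is strict.

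Next I would invoke the Sturm principle with the reference function taken to be $\cos x$, $\sin x/x$ and $\sin x$ in the three cases; these are smooth oscillatory functions with simple zeros at $(k-1/2)\pi$, $k\pi$ $(k\ge 1)$ and $k\pi$ $(k\ge 1)$ respectively, so constancy of the Wronskian yields that $U$ (resp. $V$) has infinitely many simple positive zeros, exactly one strictly between each consecutive pair of reference zeros. To convert this interlacing into the stated intervals and the assertion \emph{no positive zeros elsewhere}, the essential extra input is the elementary positivity
$$\fc[f](x)=\int_0^1 f(t)\cos xt\,dt>0\ \ (0<x\le \pi/2),\qquad \fs[f](x)=\int_0^1 f(t)\sin xt\,dt>0\ \ (0<x\le\pi),$$
which holds because the trigonometric kernels remain positive on $(0,1)$ throughout those ranges while $f>0$. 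This fixes the lower end of the first interval and shows the interlacing is exhaustive on the whole of $(0,\infty)$; for (P1) and (P3) this already closes the argument, since the first reference zero sits at $\pi/2$ and $\pi$ respectively.

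The step I expect to be most delicate is the placement of the first zero in (P2). There the first zero of the reference function $\sin x/x$ is at $\pi$, so the interlacing principle alone only produces zeros in $(k\pi,(k+1)\pi)$, $k\ge 1$, and detects nothing inside $(0,\pi)$; yet the statement demands a zero in $(\pi/2,\pi)$. To supply it I would argue directly on $(0,\pi)$, where the sign of the Wronskian makes the ratio $x\,U(x)/\sin x$ strictly decreasing: since $U(0)=\int_0^1 f>0$ while $U(\pi)<0$ by the $k=1$ hypothesis, there is exactly one simple zero in $(0,\pi)$, and the positivity $U(x)>0$ on $(0,\pi/2]$ pins it into $(\pi/2,\pi)$. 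The remaining bookkeeping—confirming that the family of intervals tiles the complement of the initial positivity region, so that the count of zeros is complete—is then routine.
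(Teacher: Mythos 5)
Your proposal is correct and takes essentially the same route as the paper: the criteria are read off from the Wronskian expansions \eqref{PW1}, \eqref{PW2}, \eqref{PW3} (whose summands all have the same sign under the stated hypotheses) combined with the Sturm interlacing principle, exactly as the paper intends. The details you supply that the paper leaves implicit---the kernel positivity $U(x)>0$ on $(0,\pi/2]$ and $V(x)>0$ on $(0,\pi]$ to rule out zeros before the first reference zero, and the strictly decreasing ratio $x\,U(x)/\sin x$ on $(0,\pi)$ to place the first zero of (P2) in $(\pi/2,\pi)$---are precisely the right completions.
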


\begin{remark}\label{remarkP}
As an alternative of (P2), if $\,(-1)^k U(k\pi)<0\,$ for all $k$, then
$U(x)$ has one and only one simple zero
in each of the intervals
$$\,\big(k\pi,\,(k+1)\pi\big),\quad k=1,2,\cdots.\,$$
\end{remark}

\section{Use of positivity}
To apply P\'olya's criteria effectively, it is essential to know
whether the values of $\,U(x), V(x)\,$ at the positive zeros of $\,\cos x\,$ or $\,\sin x\,$ alternate in sign,
which may cause great difficulties in practice. If available, it is advantageous to exploit
the positivity of Fourier transform of $f(1-t)$.

\begin{lemma}\label{lemma1}
Let $f(t)$ be a positive function for $\,0<t<1\,$ such that it is in the general case and the integral
$\int_0^1 f(t) dt$ exists. Define
\begin{equation}\label{FS}
\left\{\begin{aligned}
&{U_s(x) =\int_0^1 f(1-t)\cos xt\, dt,}\\
&{V_s(x)  =\int_0^1 f(1-t) \sin xt\, dt}\end{aligned}\right.\qquad (x>0).
\end{equation}

\begin{itemize}
\item[\rm(i)]
If $\,U_s(x)>0\,$ for all $\,x>0,$ then the inequalities
$$(-1)^k U(k\pi)>0, \quad (-1)^{k} V\big[(k-1/2)\pi\big]<0$$
hold true for each positive integer $k$. Moreover, $U(x), V(x)$
can not have common positive zeros.

\item[\rm(ii)]
If $\,V_s(x)>0\,$ for all $\,x>0,$ then the inequalities
$$(-1)^{k} U\big[(k-1/2)\pi\big]<0, \quad (-1)^{k} V(k\pi)<0$$
hold true for each positive integer $k$. Moreover, $U(x), V(x)$
can not have common positive zeros.
\end{itemize}
\end{lemma}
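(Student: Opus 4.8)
The plan is to reduce everything to two algebraic identities linking the pair $(U,V)$ to the pair $(U_s,V_s)$. Starting from $U(x)=\int_0^1 f(t)\cos xt\,dt$, I would substitute $t\mapsto 1-s$ to get $U(x)=\int_0^1 f(1-s)\cos\!\big(x(1-s)\big)\,ds$, expand $\cos(x-xs)=\cos x\cos xs+\sin x\sin xs$, and likewise treat $V(x)$ via $\sin(x-xs)=\sin x\cos xs-\cos x\sin xs$. Recognizing the resulting integrals as $U_s(x)$ and $V_s(x)$ yields
\[
U(x)=\cos x\,U_s(x)+\sin x\,V_s(x),\qquad V(x)=\sin x\,U_s(x)-\cos x\,V_s(x),
\]
valid for all $x>0$. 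These two formulas carry the entire content of the lemma; everything afterwards is evaluation.

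Next I would specialize to the two families of abscissae. Using $\cos(k\pi)=(-1)^k$, $\sin(k\pi)=0$, $\cos\!\big((k-1/2)\pi\big)=0$ and $\sin\!\big((k-1/2)\pi\big)=(-1)^{k+1}$, the first identity gives $U(k\pi)=(-1)^k U_s(k\pi)$ and $U\!\big((k-1/2)\pi\big)=(-1)^{k+1}V_s\!\big((k-1/2)\pi\big)$, while the second gives $V(k\pi)=-(-1)^k V_s(k\pi)$ and $V\!\big((k-1/2)\pi\big)=(-1)^{k+1}U_s\!\big((k-1/2)\pi\big)$. For part (i) the hypothesis $U_s>0$ then gives $(-1)^k U(k\pi)=U_s(k\pi)>0$ and $(-1)^k V\!\big((k-1/2)\pi\big)=-U_s\!\big((k-1/2)\pi\big)<0$; for part (ii) the hypothesis $V_s>0$ gives $(-1)^k U\!\big((k-1/2)\pi\big)=-V_s\!\big((k-1/2)\pi\big)<0$ and $(-1)^k V(k\pi)=-V_s(k\pi)<0$. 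Each sign assertion is read off directly.

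For the nonexistence of common positive zeros, I would observe that the matrix relating $(U_s,V_s)$ to $(U,V)$, namely
\[
\begin{pmatrix}\cos x & \sin x\\ \sin x & -\cos x\end{pmatrix},
\]
has determinant $-(\cos^2 x+\sin^2 x)=-1$ for every $x$, hence is invertible. Thus $U(x_0)=V(x_0)=0$ would force $U_s(x_0)=V_s(x_0)=0$, contradicting the strict positivity of $U_s$ in case (i) or of $V_s$ in case (ii). I do not expect any genuine obstacle here: the whole argument is contained in the two identities, and the only delicate point is keeping track of the sign of $\sin\!\big((k-1/2)\pi\big)=(-1)^{k+1}$, since it is precisely this factor that decides whether each inequality emerges positive or negative.
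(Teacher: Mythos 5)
Your proposal is correct and takes essentially the same route as the paper: the paper's one-line proof rests on the identical addition-formula identities, written in the inverted but equivalent form $U_s(x)=\cos x\,U(x)+\sin x\,V(x)$, $V_s(x)=\sin x\,U(x)-\cos x\,V(x)$ (the relating matrix is an involution, so the two directions are the same relations). Your evaluations at $x=k\pi$ and $x=(k-1/2)\pi$ and the invertibility argument for the absence of common zeros simply supply the details the paper declares ``immediate.''
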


The proofs are immediate in view of the relations
\begin{align*}
U_s(x) &= \cos x\,U(x) +\sin x\,V(x),\\
V_s(x) &= \sin x\,U(x) -\cos x\,V(x).
\end{align*}

Owing to Lemma \ref{lemma1}, we may rephrase P\'olya's criteria as follows.

\begin{corollary}\label{corollaryP}
Let $f(t)$ be a positive function for $\,0<t<1\,$ such that it is in the general case and the integral
$\int_0^1 f(t) dt$ exists.

\begin{itemize}
\item[\rm(i)] Suppose that $\,V_s(x)>0\,$ for all $\,x>0.$
Then $U(x)$ has exactly one simple zero in each of the intervals
$\,\big((k-1/2)\pi,\,(k+1/2)\pi\big)\,$ and so does $V(x)$ in each of the intervals
$\,\big(k\pi,\,(k+1)\pi\big),$ where
$\,k=1,2,\cdots.$ Moreover, both $U(x), V(x)$ have no positive zeros elsewhere and share no common zeros.

\item[\rm(ii)] Suppose that $\,U_s(x)>0\,$ for all $\,x>0.$ Then
$U(x)$ has exactly one simple zero in each of the intervals
$\,\big(\pi/2,\,\pi\big),\,\,\big(k\pi,\,(k+1)\pi\big)\,$ and
$V(x)$ has at least one zero in each of the intervals
$$\,\big(\pi,\,3\pi/2\big),\,\, \big((k+1/2)\pi,\,(k+3/2)\pi\big),$$ where $\,k=1,2,\cdots.$
Moreover, both $U(x), V(x)$ have no positive zeros elsewhere and share no common zeros.

\item[\rm(iii)] Suppose that $\,V_s(x)>0, \,U_s(x)>0\,$ for all $\,x>0.$
Then $U(x)$ has exactly one simple zero in each of the intervals
$\,\big((k-1/2)\pi,\,k\pi\big)\,$ and so does $V(x)$ in each of the intervals
$\,\big(k\pi,\,(k+1/2)\pi\big),$ where
$\,k=1,2,\cdots.$ Moreover, $U(x), V(x)$ have no positive zeros elsewhere and share no common zeros.
\end{itemize}
\end{corollary}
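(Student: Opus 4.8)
The plan is to translate each positivity hypothesis, via Lemma~\ref{lemma1}, into sign data for $U$ and $V$ at the integer and half-integer multiples of $\pi$, and then to invoke the matching part of Theorem~\ref{theoremP}. Throughout I will use the two elementary facts that $U(0)=\int_0^1 f(t)\,dt>0$ and that $V(x)>0$ for $0<x\le\pi$, the latter because $\sin xt>0$ for $0<t<1$ when $0<x\le\pi$.

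For part (i), assuming $V_s>0$, Lemma~\ref{lemma1}(ii) delivers at once $(-1)^kU[(k-1/2)\pi]<0$ and $(-1)^kV(k\pi)<0$. These are verbatim the hypotheses of criteria (P1) and (P3), so their conclusions give the asserted unique simple zero of $U$ in each $\big((k-1/2)\pi,(k+1/2)\pi\big)$ and of $V$ in each $\big(k\pi,(k+1)\pi\big)$, with no zeros elsewhere; the impossibility of common positive zeros is the final clause of Lemma~\ref{lemma1}(ii). Part (i) is thus immediate.

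For part (ii), I would apply Lemma~\ref{lemma1}(i). Its first inequality $(-1)^kU(k\pi)>0$ is the hypothesis of (P2), which localizes the simple zeros of $U$ exactly as stated. The information it supplies about $V$ lives only at the half-integer points, so no Wronskian criterion is available; here I switch to the intermediate value theorem. Since $(-1)^kV[(k-1/2)\pi]<0$, the values $V(\pi/2),V(3\pi/2),V(5\pi/2),\dots$ alternate in sign, forcing a zero in each interval $\big((k-1/2)\pi,(k+1/2)\pi\big)$ for $k\ge1$; the first of these is sharpened to $(\pi,3\pi/2)$ using $V>0$ on $(0,\pi]$, and the remainder are the intervals $\big((k+1/2)\pi,(k+3/2)\pi\big)$. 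Because these intervals tile $(\pi,\infty)$, while $V$ is nonzero at the half-integer breakpoints and positive on $(0,\pi]$, there are no zeros elsewhere; the no-common-zero clause is again Lemma~\ref{lemma1}(i).

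Part (iii) combines both hypotheses, so both halves of Lemma~\ref{lemma1} apply and the signs of $U$ and $V$ are known at \emph{every} multiple of $\pi/2$. From $V_s>0$ I keep the full descriptions given by (P1) for $U$ and (P3) for $V$ (one simple zero in $\big((k-1/2)\pi,(k+1/2)\pi\big)$, respectively $\big(k\pi,(k+1)\pi\big)$, and none elsewhere), and then use the extra signs coming from $U_s>0$ to push each zero into the left half of its interval. Concretely, $U[(k-1/2)\pi]$ and $U(k\pi)$ carry opposite signs $(-1)^{k+1}$ and $(-1)^k$, so $U$ changes sign on $\big((k-1/2)\pi,k\pi\big)$; since this subinterval must then contain a zero while the full interval contains only one, that zero lies in $\big((k-1/2)\pi,k\pi\big)$. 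Symmetrically, $V(k\pi)$ and $V[(k+1/2)\pi]$ have opposite signs $(-1)^{k+1}$ and $(-1)^k$, confining the unique zero of $V$ in $\big(k\pi,(k+1)\pi\big)$ to $\big(k\pi,(k+1/2)\pi\big)$, and the no-common-zero clause follows from either half of Lemma~\ref{lemma1}. I expect every step except these sign-change localizations to be purely formal, so the main obstacle is the bookkeeping in part (iii): checking that the two sign values straddling each candidate half-interval really are opposite, so that the intermediate value theorem drives the (unique, by (P1)/(P3)) zero into the desired half.
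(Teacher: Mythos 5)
Your proposal is correct and follows exactly the route the paper intends: the paper treats this corollary as an immediate consequence of Lemma \ref{lemma1} (translating the positivity of $U_s$, $V_s$ into alternating sign data at the points $k\pi$ and $(k-1/2)\pi$) combined with the criteria (P1)--(P3) of Theorem \ref{theoremP}, with the intermediate value theorem supplying the ``at least one zero'' claim for $V$ in part (ii) and the half-interval localizations in part (iii). Your sign bookkeeping in parts (ii) and (iii) is accurate, so nothing is missing.
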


\begin{remark}
If $f(t)$ is in the exceptional case, then it is easy to prove that $U(x), V(x)$ have infinitely many
common positive zeros.
\end{remark}

While we shall present a class of examples in the last section,
we remark that P\'olya's original theorems can be proved easily by applying the above criteria.
To describe briefly, let us assume that $f(t)$ is a positive increasing function
for $\,0<t<1\,$ such that it is in the general case and $\int_0^1 f(t) dt$ exists. Since $f(1-t)$
changes monotonicity, Theorem \ref{theorem1} shows that
$\,V_s(x)>0\,$ for all $\,x>0\,$ and the first parts of (A1), (A2) of Theorem A follow from part (i).
If $f(t)$ is convex at the same time with $\,f(0+) =0,$ then $f(1-t)$ is also convex with $\,f(1-)=0.$
It follows from Corollary \ref{corollary1} that $\,U_s(x)>0\,$ for all $\,x>0\,$
and the second part of (A2) follows from part (iii).

On integrating by parts, it is trivial to see that
$$U(k\pi) = \frac{1}{k\pi}\int_0^1 \left(-f'(t)\right)\sin k\pi t\,dt,\quad k=1, 2, \cdots,$$
valid due to the integrability condition on $f(t)$, as P\'olya observed, and the second part of (A1) and Theorem B follow by
obvious modifications.

Concerning the location of the first positive zero of $U(x)$, however, we may
improve the upper bound based on the following observation.

\begin{lemma}\label{lemma2}
Let $f(t)$ be positive for $\,0<t<1\,$
and in the general case.

\begin{itemize}
\item[\rm(i)] If $f(t)$ is decreasing, then $\,U(x)>0\,$ for $\,0<x\le\pi.$
\item[\rm(ii)] If $f(t)$ is increasing, then $U(x)$ has at least one zero for $\,\pi/2<x<\pi.$
\end{itemize}
\end{lemma}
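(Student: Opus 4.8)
The plan is to handle the two parts separately; in each case I would reduce the sign question to a pointwise monotonicity comparison obtained by folding the interval of integration about the point where the cosine changes sign. For part (i), I would first substitute $u=xt$ to write
\[
U(x)=\frac1x\int_0^x f(u/x)\cos u\,du,
\]
so it suffices to show $J(x):=\int_0^x g(u)\cos u\,du>0$, where $g(u)=f(u/x)$ is positive and decreasing on $(0,x)$. When $0<x\le\pi/2$ this is immediate, since $\cos u\ge0$ there and $g>0$. For $\pi/2<x\le\pi$, I would apply the reflection $u\mapsto\pi-u$ on the subinterval $(\pi/2,x)$ and use $\cos(\pi-u)=-\cos u$ to recast
\[
J(x)=\int_0^{\pi-x} g(u)\cos u\,du+\int_{\pi-x}^{\pi/2}\big[g(u)-g(\pi-u)\big]\cos u\,du.
\]
On $(\pi-x,\pi/2)$ one has $u\le\pi/2\le\pi-u$ and $\cos u\ge0$, so the monotonicity of $g$ forces $g(u)\ge g(\pi-u)$ and both integrands are nonnegative; positivity of $U(x)$ follows.

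For part (ii) I would simply exhibit a sign change of $U$ across $(\pi/2,\pi)$ and invoke the intermediate value theorem. Since $\cos(\pi t/2)>0$ for $0<t<1$ and $f>0$, one has $U(\pi/2)>0$. At the other endpoint, folding about $t=1/2$ via $t\mapsto 1-t$ and using $\cos\pi(1-t)=-\cos\pi t$ gives
\[
U(\pi)=\int_0^{1/2}\big[f(t)-f(1-t)\big]\cos\pi t\,dt.
\]
Because $f$ is increasing, $f(t)\le f(1-t)$ on $(0,1/2)$ while $\cos\pi t>0$ there, so $U(\pi)\le0$, and the general-case hypothesis upgrades this to the strict inequality $U(\pi)<0$. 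Continuity of $U$ then produces a zero in $(\pi/2,\pi)$.

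The two reflection substitutions are routine, so the only delicate point is the strictness of the inequalities. In part (ii) strictness at $x=\pi$ is what forces the zero into the open interval, and it follows from the general-case assumption, which rules out $f(t)=f(1-t)$ almost everywhere. The main obstacle is the analogous boundary case $x=\pi$ in part (i): there $\pi-x=0$, the dominant first term $\int_0^{\pi-x} g(u)\cos u\,du$ vanishes, and I must argue that $g(u)>g(\pi-u)$ on a set of positive measure — again a consequence of $g$ being decreasing and in the general case — to conclude $J(\pi)>0$ rather than merely $J(\pi)\ge0$.
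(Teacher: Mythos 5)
Your proof is correct and is essentially the paper's own argument in different coordinates: the paper substitutes $x=\pi(1+\theta)/2$ and rescales so the kernel is $\cos(\pi t/2)$ on $[0,1+\theta]$, then folds about the zero of the cosine, which is exactly your reflection $u\mapsto\pi-u$ after the substitution $u=xt$, and part (ii) is likewise settled there by showing $U(\pi)<0$ (the case $\theta=1$ of the same identity) and invoking the intermediate value theorem. The delicate points you flag — strictness at $x=\pi$ via the general-case hypothesis, since a monotone function equal to its reflection a.e.\ would be constant and hence exceptional — are resolved correctly and match how the paper (implicitly, as in its proof of Theorem 2.1) handles them.
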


\begin{proof}
Setting $\,x = \pi(1+\theta)/2,$ where
$\,0\le \theta\le 1,$ we have
\begin{align*}\label{CV11}
(1+\theta)U(x) &=\left(\int_0^1 +\int_1^{1+\theta}\right)
f\left(\frac{t}{1+\theta}\right)\cos\left(\frac{\pi t}{2}\right)dt\nonumber\\
&=\int_0^\theta \left[f\left(\frac{1-t}{1+\theta}\right) - f\left(\frac{1+t}{1+\theta}\right)\right]
\sin\left(\frac{\pi t}{2}\right)dt\nonumber\\
&\qquad\qquad + \int_\theta^1 f\left(\frac{1-t}{1+\theta}\right)\sin\left(\frac{\pi t}{2}\right)dt.
\end{align*}

If $f(t)$ is decreasing, then it shows that $\,U(x)>0\,$
for $\,\pi/2\le x\le \pi.$ Since $\,U(x)>0\,$ for $\,0<x\le\pi/2,$
we conclude that $\,U(x)>0\,$ for $\,0<x\le\pi.$ If $f(t)$ is increasing,
then we take $\,\theta =1\,$ to find that $\,U(\pi)<0.$ Since $\,U(\pi/2)>0,$
the result of part (ii) follows by the intermediate value property.
\end{proof}

The first part of (A1), Theorem A, must be revised as follows.

\begin{corollary}\label{corollary0}
If $f(t)$ is a positive increasing function for $\,0<t<1\,$
such that it is in the general case and the integral $\,\int_0^1 f(t) dt\,$ exists, then
$U(x)$ has exactly one simple zero in each of the intervals
$$\big(\pi/2,\,\pi),\,\,\big((k+1/2)\pi,\,(k+3/2)\pi\big),\quad k=1,2,\cdots,$$
and no other positive zeros.
\end{corollary}

\section{Zeros of derivatives}
By Rolle's theorem, if the Fourier transform has an infinity of zeros, then it is evident that
its derivative has at least one zero between two consecutive zeros. On inspecting the method of Wronskians,
however, it is possible to obtain more precise information on the zeros of derivatives.

Given a positive function $f(t)$ defined for $\,0<t<1,$ if it is integrable, then
differentiating under the integral sign gives
\begin{equation}
U'(x) = -\int_0^1 t f(t)\sin xt\,dt,
\quad V'(x) =  \int_0^1 t f(t) \cos xt\,dt.
\end{equation}
If $f(t)$ is increasing, then $tf(t)$ is also increasing and in the general case.
It follows from Theorem A that $U'(x)$ has exactly one simple zero in each of the
intervals $\,\big(k\pi,\,(k+1)\pi\big).$ Likewise, $V'(x)$ has exactly one simple zero in each of the
intervals $\,\big(\pi/2,\,\pi),\,\,\big((k+1/2)\pi,\,(k+3/2)\pi\big).$

If $f(t)$ is convex at the same time, then $tf(t)$ is also convex and has the limiting value
$\,\lim_{t\to 0+} tf(t) =0\,$ and thus those zeros of $U'(x)$ lie in
$\,\big(k\pi,\,(k+1/2)\pi\big)\,$ according to Theorem A. Similarly, those zeros of $V'(x)$ lie in
$\,\big((k-1/2)\pi,\,k\pi\big)\,$ unless $\,(tf(t))'$ belongs to the exceptional case or equivalently
$f(t)$ belongs to the exceptional case.

On the other hand, as explained in the preceding section, it is simple to find that
$\,(-1)^k U(k\pi)>0\,$ for all $k$ when $f(t)$ is increasing and convex.
An immediate consequence of expansion formula \eqref{PW1} is that
\begin{equation}\label{Z1}
W\left[U(x), \,\frac{\sin x}{x}\right]>0\qquad (x>0).
\end{equation}
Let us denote by $(\sigma_k)$ all of the positive zeros of the derivative
$$ \frac{d}{dx} \left(\frac{\sin x}{x}\right) = \frac{x\cos x - \sin x}{x^2} $$
arranged in ascending order of magnitude. Since
$\,k\pi<\sigma_k<(k+1/2)\pi\,$ for each $k$, the Wronskian inequality \eqref{Z1}
implies that $\, (-1)^k U'(\sigma_k)<0.$ By combining with the above result, we may conclude that
$U'(x)$ has exactly one simple zero in each of the intervals $\,\big(k\pi, \,\sigma_k\big).$

What have been proved can be summarized as follows.

\begin{theorem}\label{theorem3}
Suppose that $f(t)$ is positive, increasing for $\,0<t<1\,$ and the integral
$\int_0^1 f(t) dt$ exists. Let $(\sigma_k)$ denote the sequence of all positive roots of the equation $\,\tan x = x\,$
arranged in ascending order of magnitude so that $\,k\pi<\sigma_k<(k+1/2)\pi\,$
for each $k$.

\begin{itemize}
\item[\rm(i)] $\,U'(x)<0\,$ for $\,0<x\le \pi\,$ and $U'(x)$ has exactly one simple zero
in each of the intervals $\,\big(k\pi,\,(k+1)\pi\big)\,$
with no positive zeros elsewhere; if, in addition, $f(t)$ is convex, then $U'(x)$ has a unique simple zero
in each of the intervals $\,\big(k\pi,\,\sigma_k\big),$ where $\,k=1,2,\cdots.$

\item[\rm(ii)] $\,V'(x)>0\,$ for $\,0<x\le \pi/2\,$ and $V'(x)$ has exactly one simple zero
in each of the intervals
$\,\big(\pi/2,\,\pi),\,\,\big((k+1/2)\pi,\,(k+3/2)\pi\big)\,$
with no positive zeros elsewhere; if, in addition, $f(t)$ is convex and in the general case, then $V'(x)$ has a unique simple zero
in each of the intervals
$\,\big((k-1/2)\pi,\,k\pi\big),$ where $\, k=1,2,\cdots.$
\end{itemize}
\end{theorem}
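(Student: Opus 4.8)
The plan is to differentiate under the integral sign and recognise the resulting transforms as those of the auxiliary density $g(t)=tf(t)$. Explicitly one has $U'(x)=-\fs[g](x)$ and $V'(x)=\fc[g](x)$, so the positive zeros of $U'$ and $V'$ are precisely the positive zeros of the sine and cosine transforms of $g$. The key observation is that $g$ inherits all the relevant structure from $f$: it is positive, increasing and in the general case on $(0,1)$; it is integrable since $g(t)\le f(t)$; and because $f$ is increasing and integrable, $f(0+)$ is finite, so $g(0+)=0$. If moreover $f$ is convex, then $g''=2f'+tf''\ge 0$, so $g$ is convex, and one checks that $g'$ is in the general case whenever $f$ is. The first assertions of (i) and (ii) will then follow by applying the already-established results to $g$ in place of $f$.

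For part (i), applying (A2) of Theorem A to $g$ gives $\fs[g](x)>0$ on $(0,\pi]$, whence $U'(x)=-\fs[g](x)<0$ there, together with exactly one simple zero of $\fs[g]$, and hence of $U'$, in each $\bigl(k\pi,(k+1)\pi\bigr)$. When $f$ is convex, the refined part of (A2) applies since $g(0+)=0$ and localises this zero to $\bigl(k\pi,(k+1/2)\pi\bigr)$. For part (ii), applying Corollary \ref{corollary0} to $g$ shows $V'(x)=\fc[g](x)>0$ on $(0,\pi/2]$ and places a unique simple zero of $V'$ in $(\pi/2,\pi)$ and in each $\bigl((k+1/2)\pi,(k+3/2)\pi\bigr)$; when $f$ is convex and in the general case, the refined part of (A1) applied to $g$ moves these zeros into $\bigl((k-1/2)\pi,k\pi\bigr)$.

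The only genuinely new ingredient, and the step I expect to be the crux, is the sharper localisation $\bigl(k\pi,\sigma_k\bigr)$ in part (i) under convexity, since $\sigma_k<(k+1/2)\pi$. Here I would invoke the Wronskian machinery of Section 3. Because $f$ is increasing and convex we have $(-1)^k U(k\pi)>0$ for every $k$, so the expansion \eqref{PW1} forces $W\left[U(x),\,\sin x/x\right]>0$ for all $x>0$, which is \eqref{Z1}. Evaluating this Wronskian at $x=\sigma_k$, where $\frac{d}{dx}\left(\frac{\sin x}{x}\right)$ vanishes by definition of $\sigma_k$, leaves only the term $-U'(\sigma_k)\,\frac{\sin\sigma_k}{\sigma_k}$; since $k\pi<\sigma_k<(k+1/2)\pi$ forces $\sin\sigma_k$ to have sign $(-1)^k$, reading off signs yields $(-1)^k U'(\sigma_k)<0$.

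It remains to combine this with the behaviour of $U'$ at the left endpoint. A short sign count for $\fs[g]$, whose zeros lie in $\bigl(j\pi,(j+1/2)\pi\bigr)$ so that $k\pi$ falls strictly between the consecutive zeros indexed $k-1$ and $k$, shows that $\fs[g](k\pi)$ has sign $(-1)^{k-1}$, hence $(-1)^k U'(k\pi)>0$. Therefore $U'$ takes opposite signs at $k\pi$ and at $\sigma_k$ and must vanish in between; as the first part already guarantees a single zero throughout $\bigl(k\pi,(k+1)\pi\bigr)$, that zero necessarily lies in $\bigl(k\pi,\sigma_k\bigr)$, completing the argument. The main obstacle is precisely this sign bookkeeping, since one must track the alternation of $\fs[g]$ and the sign of $\sin\sigma_k$ consistently: an error of a single index would misplace the zero on the wrong side of $\sigma_k$.
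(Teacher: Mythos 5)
Your proposal is correct and follows essentially the same route as the paper: you differentiate under the integral sign to identify $U'(x)=-\fs[tf(t)](x)$ and $V'(x)=\fc[tf(t)](x)$, transfer the hypotheses to $g(t)=tf(t)$ and apply Theorem A together with Corollary \ref{corollary0}, and then obtain the sharper bound $\sigma_k$ from the Wronskian inequality \eqref{Z1} (a consequence of \eqref{PW1} and $(-1)^kU(k\pi)>0$) evaluated at the critical points $\sigma_k$ of $\sin x/x$, yielding $(-1)^kU'(\sigma_k)<0$. The explicit sign count at $x=k\pi$ that you carry out is precisely the step the paper compresses into ``combining with the above result,'' so your write-up matches the paper's proof while being slightly more detailed at that final point.
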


Due to the obvious relation
$$ U'(x) = -x \int_0^1\left[\int_t^1 s f(s) ds\right] \cos xt\,dt\qquad(x>0),$$
valid when $f(t)$ is continuous, part (i) yields the following.

\begin{corollary}\label{corollaryS}
If $f(t)$ is a positive increasing convex function for $\,0<t<1\,$ such that
the integral $\int_0^1 f(t) dt$ exists, then the Fourier transform
$$\fc\left[\int_t^1 s f(s) ds\right](x),\quad x>0,$$
is positive for $\,0<x\le\pi\,$ and has exactly one simple zero in each of the intervals
$\,\big(k\pi, \,\sigma_k\big),\, k=1,2,\cdots, $ with no positive zeros elsewhere.
\end{corollary}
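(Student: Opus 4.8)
The plan is to reduce everything to the zero structure of $U'(x)$ that was just established. The identity displayed immediately before the statement, obtained by integrating $U'(x)=-\int_0^1 tf(t)\sin xt\,dt$ by parts (the boundary terms vanish because $\int_t^1 sf(s)\,ds$ equals $0$ at $t=1$ and $\sin xt$ equals $0$ at $t=0$), may be rewritten as
$$\fc\left[\int_t^1 s f(s)\,ds\right](x) = -\frac{U'(x)}{x}\qquad(x>0).$$
Thus the Fourier cosine transform in question is nothing but $-U'(x)/x$, and since the factor $-1/x$ is finite and does not vanish for $x>0$, both the sign and the positive zeros of the transform are completely governed by those of $U'(x)$.

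First I would invoke Theorem \ref{theorem3}(i). Because $f(t)$ is positive, increasing, convex and integrable on $(0,1)$—and a convex function is automatically continuous on an open interval, so the integration by parts above is legitimate—the convex form of part (i) applies verbatim: $U'(x)<0$ for $0<x\le\pi$, and $U'(x)$ has exactly one simple zero in each interval $\big(k\pi,\sigma_k\big)$, $k=1,2,\cdots$, with no other positive zeros. Multiplying by the strictly negative quantity $-1/x$ turns the inequality $U'(x)<0$ on $(0,\pi]$ into positivity of the transform there, while leaving the set of positive zeros unchanged. Hence the transform is positive on $(0,\pi]$, vanishes exactly once in each $\big(k\pi,\sigma_k\big)$, and nowhere else.

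The one step that is not purely formal is the transfer of simplicity. Writing $g(x)=-U'(x)/x$, I would compute $g'(x)=-U''(x)/x+U'(x)/x^2$; at a zero $x_0>0$ of $U'$ the second term drops out, leaving $g'(x_0)=-U''(x_0)/x_0$, and since $x_0$ is a simple zero of $U'$ one has $U''(x_0)\ne 0$, whence $g'(x_0)\ne 0$. Therefore every positive zero of the transform is simple. This short computation is essentially the only point requiring verification; the whole substance of the corollary is carried by the convex case of Theorem \ref{theorem3}(i), with convexity of $f$ being exactly what sharpens the localization from $\big(k\pi,(k+1)\pi\big)$ down to $\big(k\pi,\sigma_k\big)$.
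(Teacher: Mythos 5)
Your proof is correct and takes essentially the same route as the paper, which deduces the corollary in one line from the identity $U'(x) = -x\,\fc\left[\int_t^1 s f(s)\,ds\right](x)$ together with the convex case of Theorem \ref{theorem3}(i). The details you supply---continuity of $f$ (from convexity) to justify the integration by parts, and the preservation of sign, zero location, and simplicity under multiplication by the nonvanishing factor $-1/x$---are exactly the steps the paper leaves implicit.
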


In consideration of the Bessel function $J_\nu(x)$ represented by
\begin{equation}
J_\nu(x) = \frac{2(x/2)^\nu}{\sqrt{\pi}\,\Gamma(\nu+1/2)}\,\int_0^1 (1-t^2)^{\nu-1/2}
\cos xt\,dt,
\end{equation}
where $\,\nu>-1/2\,$ (\cite[3.3]{Wa}), it is simple to find that the
density function $\,f_\nu(t) = (1-t^2)^{\nu-1/2}\,$ has the recursive property
\begin{equation}\label{B1}
f_{\nu+1}(t) = (2\nu+1)\int_t^1 s f_\nu(s) ds,\quad 0<t<1.
\end{equation}

If $\,-1/2<\nu<1/2,$ being $f_\nu(t)$ increasing and convex for $\,0<t<1,$ it follows
from Theorem A that $J_\nu(x)$ has exactly one simple zero in each of the intervals
$\,\big((k-1/2)\pi,\,k\pi\big),\, k=1,2,\cdots,\,$ and no positive zeros elsewhere.

As customary, let $\big(j_{\nu, k}\big)$ be the sequence of all positive zeros of
$J_\nu(x)$ arranged in ascending order of magnitude. A classical theorem due to Schl\"afli
and Watson asserts that the map $\,\nu\mapsto j_{\nu, k},$ with $k$ fixed, is continuous and
strictly increasing for $\,\nu>-1\,$ (see \cite[15.6]{Wa}). Since
$$ J_{- 1/2}(x) = \sqrt{\frac{2}{\pi x}} \cos x,\quad
J_{1/2}(x) = \sqrt{\frac{2}{\pi x}} \sin x,$$
the Schl\"afli-Watson theorem implies that
$$\,j_{-1/2, k} =(k-1/2)\pi<j_{\nu, k}<k\pi = j_{1/2, k}\,,\quad k=1,2,\cdots,$$
for $\,-1/2<\nu<1/2,$ the same bounds obtained by Theorem A.  In this sense,
Theorem A gives the best possible lower and upper bounds for the $k$th positive zeros of $J_\nu(x)$
valid for the range $\,-1/2<\nu<1/2.$

If $\,1/2<\nu<3/2,$ then $f_\nu(t)$ is decreasing and concave for $\,0<t<1\,$ so that
$\,k\pi<j_{\nu, k}<(k+1)\pi\,$ for each $k$ according to Theorem B. Since
$$ J_{3/2}(x) = \sqrt{\frac{2}{\pi x}} \left(\frac{\sin x}{x} -\cos x\right),$$
which does not vanish at $\, x= (k+1)\pi,$ these upper bounds are not optimal.
By using the recursive property \eqref{B1}, however, an application of Corollary \ref{corollaryS}
to the case $\,-1/2<\nu<1/2\,$ shows that
$J_{\nu+1}(x)$ has exactly one simple zero in each of the intervals
$\,\big(k\pi,\,\sigma_k\big),$ which implies that
$\,k\pi<j_{\nu, k}<\sigma_k\,$ for $\,1/2<\nu<3/2.$ On noticing
$\,\sigma_k = j_{3/2, k}\,$ for each $k$, we find that $\,j_{\nu, k}\to \sigma_k\,$ as $\,\nu\to 3/2\,$
and thus the upper bounds $(\sigma_k)$ are optimal.

\begin{remark}
In \cite{KK}, Ki and Kim \cite{KK} found the following relations
\begin{align*}
&(x\sin x +\cos x) U(x) + x\cos x\,U'(x)= f(0)\nonumber\\
&\qquad\qquad + \int_0^1 \left[ 1-(\sin x \sin xt + t\cos x\cos xt)\right] df(t),\\
&(\sin x -x\cos x) V(x) + x\sin x\,V'(x)= (1-\cos x)f(0)\nonumber\\
&\qquad\qquad + \int_0^1 \left[ 1-(\cos x \cos xt + t\sin x\sin xt)\right] df(t),
\end{align*}
valid when $f(t)$ is defined for $\,0\le t<1\,$ and integrable.
If $f(t)$ is positive, increasing and in the general case, then it is a simple matter of algebra
to deduce from these relations the Wronskian inequalities
\begin{equation*}
W\left[\frac{\cos x}{x},\,U(x)\right]>0,\quad W\left[\frac{\sin x}{x},\,V(x)\right]>0
\qquad(x>0).
\end{equation*}
Proceeding in the same way as above, it is immediate to obtain an analogue
of Theorem \ref{theorem3} on the basis of these inequalities. Due to less precise characters,
however, we shall not present further details or applications.
\end{remark}

\section{Improvement of Theorem B}
This section focuses on obtaining an improvement of P\'olya's results, as stated in Theorem B,
on the distribution of zeros of Fourier cosine transforms for monotonically decreasing concave density functions.

As a motivational example, the Fourier cosine transform
\begin{align}\label{CV1}
\int_0^1 (1-t^2) \cos xt \,dt = \frac{\,2(\sin x - x\cos x)\,}{x^3}\qquad(x>0)
\end{align}
has simple zeros $(\sigma_k)$, where $\,k\pi<\sigma_k<(k+1/2)\pi\big)\,$ for each $k$.
We note that Theorem B is applicable because
the function $\,f(t) = 1- t^2\,$ is clearly positive, decreasing, concave for $\,0<t<1\,$
and its derivative $\,-f'(t) = 2t\,$ is in the general case. Nevertheless,
the only information we can deduce from Theorem B is that \eqref{CV1} has a unique simple zero
in each of the intervals $\,\big(k\pi,\,(k+1)\pi\big),\,k=1,2,\cdots,$ which is far from being optimal.

Our improvement of Theorem B reads as follows.

\begin{theorem}\label{theorem4}
For a positive differentiable function $f(t)$ for $\,0<t<1,$ assume that
$-f'(t)$ is positive, increasing, convex, in the general case and
integrable. Then $U(x)$ is positive for $\,0<x\le\pi\,$ and has an
infinity of zeros for $\,x>\pi\,$ all of which are simple and distributed as follows, where
$$ L = \lim_{t\to 0+} \left[- f'(t)\right],\quad M = \lim_{t\to 1-} f(t).$$

\begin{itemize}
\item[\rm(i)] For any $\,L\ge 0,\,M\ge 0,$ each of the intervals
\begin{equation*}
\big((2k-1)\pi,\,2k\pi\big),\,\,\,\big(2k\pi,\,(2k+1/2)\pi\big),\quad k=1,2,\cdots,
\end{equation*}
contains exactly one zero and $U(x)$ has no positive zeros elsewhere.
\item[\rm(ii)] If either $\,L = 0\,$ or $\,0<2L\le 3\pi M,$ then each of the intervals
$$\big(k\pi, \,(k+1/2)\pi\big),\quad k=1,2,\cdots,$$
contains exactly one zero and $U(x)$ has no positive zeros elsewhere.
\end{itemize}
\end{theorem}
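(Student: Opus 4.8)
The plan is to transfer everything onto the positive, increasing, convex function $g(t)=-f'(t)$, to which the improved criteria already proved in the paper apply. Since $g$ is integrable, $f$ is bounded and the one-sided limits $f(1-)=M$, $f(0+)$ exist finitely, so integration by parts is legitimate and gives
\begin{equation*}
x\,U(x)=M\sin x+\fs[g](x)\qquad(x>0).
\end{equation*}
Thus the positive zeros of $U$ coincide with those of $P(x):=x\,U(x)=M\sin x+\fs[g](x)$. I would first record that $U(x)>0$ on $(0,\pi]$, which is immediate from Lemma \ref{lemma2}(i) because $f$ is decreasing.

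For the coarse period-by-period distribution I would pin down the signs of $P$ at the points $k\pi$. As $g$ is increasing, $g(1-t)$ is positive, decreasing and in the general case, so Theorem \ref{theorem1} yields $\fs[g(1-t)](x)>0$ for all $x>0$; feeding this into Lemma \ref{lemma1}(ii) applied to $g$ gives $(-1)^k\fs[g](k\pi)<0$. Because $\sin k\pi=0$, this says exactly $(-1)^kU(k\pi)<0$ for every $k$. The Wronskian expansion \eqref{PW1} then forces $W[U(x),\sin x/x]<0$ on $x>0$, and (as in Remark \ref{remarkP}) $U$ acquires exactly one simple zero in each interval $\big(k\pi,(k+1)\pi\big)$ and none at the points $k\pi$; with the positivity on $(0,\pi]$ this already delivers simplicity and the ``no zeros elsewhere'' bookkeeping.

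The heart of the matter is to decide, inside each period, whether the zero lies in the first half $\big(k\pi,(k+1/2)\pi\big)$. Since $P(k\pi)$ and $P((k+1)\pi)$ have opposite signs, the zero is in the first half precisely when $(-1)^nP\big((n+1/2)\pi\big)>0$, i.e. when $M+(-1)^n\fs[g]\big((n+1/2)\pi\big)>0$. Here I would use the companion relation $\fc[g(1-t)](x)=\cos x\,\fc[g](x)+\sin x\,\fs[g](x)$, which at $x=(n+1/2)\pi$ collapses (as $\cos=0$) to $\fc[g(1-t)]\big((n+1/2)\pi\big)=(-1)^n\fs[g]\big((n+1/2)\pi\big)$. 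To control $\fc[g(1-t)]$ I peel off the boundary value by writing $g(1-t)=L+\big[g(1-t)-L\big]$, so that $\fc[g(1-t)](x)=L\sin x/x+\fc[g(1-t)-L](x)$, where the profile $g(1-t)-L$ is positive, decreasing, convex and vanishes as $t\to1-$; Corollary \ref{corollary1} then gives $\fc[g(1-t)-L](x)>0$ for all $x>0$. Substituting back,
\begin{equation*}
M+(-1)^n\fs[g]\big((n+\tfrac12)\pi\big)=M+\frac{(-1)^nL}{(n+1/2)\pi}+\fc[g(1-t)-L]\big((n+\tfrac12)\pi\big).
\end{equation*}
For even $n$ the middle term is nonnegative and the last is strictly positive, so the quantity is positive for all $L,M\ge0$; this places the zero of every even period in its first half and yields part (i). For part (ii) I must also handle odd $n$, where the middle term is negative: when $L=0$ it disappears and positivity is immediate, while for $L>0$ the worst case is the smallest odd index $n=1$, where $L/((n+1/2)\pi)=2L/(3\pi)$, so the condition $2L\le3\pi M$ guarantees $M-L/((n+1/2)\pi)\ge0$ for every odd $n$ and keeps the midpoint quantity positive. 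This is exactly the stated threshold.

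The main obstacle is the half-integer step: the whole refinement rests on the strict positivity of $\fc[g(1-t)-L]$ and, more delicately, on the observation that among the odd periods only the first carries a binding inequality, which is what extracts the sharp constant $3\pi M/2$. I would also flag the regularity point hidden in the appeal to Corollary \ref{corollary1}: it requires the (one-sided) derivative of the convex profile $g(1-t)-L$, i.e. the reflected derivative of $-f'$, to be in the general case; this is a mild condition implicit in the convexity hypothesis on $-f'$ and should be verified rather than silently assumed. Once these are secured, simplicity of all zeros and the ``exactly one / none elsewhere'' clauses follow mechanically by combining the Wronskian sign of the second step with the midpoint signs of the third.
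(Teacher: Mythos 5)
Your proposal follows the paper's architecture almost exactly: integration by parts onto $g=-f'$, sign bookkeeping at the points $k\pi$ to get one zero per period via the partial-fraction/Wronskian criterion, then a half-period decision at the points $(n+1/2)\pi$ after splitting off the boundary value $L$, with the same worst-index analysis ($n=1$) producing the threshold $2L\le 3\pi M$. (The paper organizes the split as $-f'(t)=[-f'(t)-L]+L$ inside the sine transform, giving $U(x)=W(x)+L(1-\cos x)/x^2+M\sin x/x$, and obtains both sign patterns from P\'olya's Theorem A applied to $-f'(t)-L$; your route to $(-1)^kU(k\pi)<0$ via Theorem \ref{theorem1} and Lemma \ref{lemma1}(ii) applied to $g$ is an equivalent, arguably cleaner, bookkeeping and needs no repair.)

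However, the pivotal half-period step fails as justified. You claim that Corollary \ref{corollary1} yields $\fc[g(1-t)-L](x)>0$ for \emph{all} $x>0$. That corollary requires the derivative of the profile to be in the general case, and this is not implied by the hypotheses of Theorem \ref{theorem4}; your closing assertion that it is ``implicit in the convexity hypothesis on $-f'$'' is wrong. The paper's own motivating example $f(t)=1-t^2$ is a counterexample: there $g(t)-L=2t$, so the profile is $2(1-t)$, whose derivative is constant (hence in the exceptional case), and
\begin{equation*}
\fc\big[2(1-t)\big](x)=\frac{2(1-\cos x)}{x^2}
\end{equation*}
vanishes at every $x=2k\pi$, so the asserted positivity ``for all $x>0$'' is simply false. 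Your argument only uses this positivity at the points $(n+1/2)\pi$, where the example happens to be harmless, but nothing you cite delivers the needed strict inequality there: the remark after Theorem \ref{theorem1} gives only $\,\ge 0\,$ in the exceptional case. The paper sidesteps this entirely by invoking P\'olya's Theorem A, part (A2) second half, for the positive, increasing, convex function $-f'(t)-L$ with limit $0$ at $0+$ --- a statement that requires no regularity of the second derivative --- and reading off the strict sign pattern $(-1)^kW(k\pi)<0$, $(-1)^kW[(k+1/2)\pi]>0$; through your own reflection identity, the latter is precisely $\fc[g(1-t)-L]\big((k+1/2)\pi\big)>0$. Substituting that citation for your appeal to Corollary \ref{corollary1} repairs the proof; the rest of your proposal (the Wronskian count, the sign conventions, and the extraction of the constant $3\pi M/2$) is correct and matches the paper.
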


\begin{proof} The assertion that $\,U(x)>0\,$ for $\,0<x\le\pi\,$ is proved in Lemma \ref{lemma2}.

Our proof for the distribution of zeros is based on the formula
\begin{align}\label{CV2}
\left\{\begin{aligned}
&{\,U(x) = W(x) + L\left(\frac{1-\cos x}{x^2}\right) + M\left( \frac{\sin x}{x}\right),}\\
&{W(x) \equiv \frac 1x \int_0^1\left[-f'(t) -L\right] \sin xt\,dt.}\end{aligned}\right.
\end{align}
To see this, we integrate by parts to write $U(x)$ into the form
\begin{align*}
U(x) &= \int_0^1 [f(t)- M]\cos xt\,dt + M\int_0^1 \cos xt\,dt\\
&=\frac{1}{x}\int_0^1\left[-f'(t)\right] \sin xt\,dt + M\left(\frac{\sin x}{x}\right).
\end{align*}
On decomposing $\,-f'(t) = [-f'(t) -L] +L,$ separating the first integral accordingly
and evaluating the latter part, formula \eqref{CV2} follows at once.

By the assumption, the function $\,g(t) = -f'(t) -L\,$
is positive, increasing, convex for $\,0<t<1\,$ and $\,g(0+) =0.$
Moreover, $g(t)$ is in the general case and the integral $\int_0^1 g(t) dt $ exists.
It follows from P\'olya's results that
\begin{equation}\label{CV3}
(-1)^k W(k\pi)<0, \quad (-1)^k W\left[\left(k+1/2\right)\pi\right]>0,
\end{equation}
where $\, k=1,2, \cdots\,$ (see \cite[section 6]{P} for details).

Evaluating at $\,x=k\pi,$ the formula \eqref{CV2} gives
$$ U(k\pi) = W(k\pi)+ \left\{
\begin{aligned}
&{\frac{2L}{k^2 \pi^2}\quad\,\,\text{if $\,k\,$ is odd},}\\
&{\quad 0\qquad\text{if $\,k\,$ is even}}\end{aligned}\right.$$
and the first inequality of \eqref{CV3} indicates that $\,(-1)^k U(k\pi)<0\,$ for each $k$.
Evaluating at $\,x=(2k+1/2)\pi,$ we also find that
\begin{align*}
U\left[\left(2k+1/2\right)\pi\right] &= W\left[\left(2k+1/2\right)\pi\right]\\
&\quad + \frac{L}{[(2k+1/2)\pi]^2} + \frac{M}{(2k+1/2)\pi}>0
\end{align*}
by the second inequality of \eqref{CV3}. Consequently, it follows from
Theorem \ref{theoremP} and Remark \ref{remarkP} that $U(x)$ has one and only one simple zero
inside each of the intervals described in part (i)
and has no zeros elsewhere, no matter what the values of $L, M$ are (note that
$\,L\ge 0,\,M\ge 0\,$ due to the assumption).

If $\,L =0,$ the second inequality of \eqref{CV3} implies that
\begin{align*}
U\left[\left(2k-1/2\right)\pi\right] =
W\left[\left(2k-1/2\right)\pi\right]
-\frac{M}{(2k-1/2)\pi}<0
\end{align*}
for each $k$ and hence $U(x)$ has a unique zero in each of the intervals described in part (ii).
In the case $\,0<2L\le 3\pi M,$ \eqref{CV3} implies that
\begin{align*}
U\left[\left(2k-1/2\right)\pi\right] = W\left[\left(2k-1/2\right)\pi\right]
+\frac{\,2L -(4k-1)\pi M\,}{2[(2k-1/2)\pi]^2} <0
\end{align*}
for each $k$, due to $\,2L-(4k-1)\pi M\le 2L - 3\pi M\le 0,\,$ which leads to the same
conclusion as above. Our proof is now complete.
\end{proof}

In practice, if $f(t)$ is a non-linear function satisfying
\begin{equation}\label{CV4}
f(t)>0,\quad f'(t)<0,\quad f''(t)\le 0, \quad f^{(3)}(t)\le 0
\end{equation}
for $\,0<t<1,$ as well as the integrability condition for $f'(t)$, then it is trivial to see that all requirements of
the above theorem are fulfilled. For example, as the function $\,f(t) = a - bt^2,\,0<b\le a,$ satisfies \eqref{CV4}
with $\,f'(0) =0,$ it follows from Theorem \ref{theorem4} that its Fourier cosine transform
\begin{align}
\int_0^1 f(t)\cos xt \,dt =\frac{\,[(a-b) x^2 + 2b] \sin x - 2bx \cos x\,}{x^3}
\end{align}
has exactly one simple zero in each of the intervals
$$\,\big(k\pi,\,(k+1/2)\pi\big),\quad k =1, 2, \cdots,$$
which can be readily verified by an elementary inspection.

\begin{corollary}\label{corollary2}
Let $f(t)$ be a positive, increasing and convex function for $\,0<t<1\,$ such that
it is in the general case and the integral $\int_0^1 f(t) dt$ exists.
Then $V(x)$ or the Fourier cosine transform related by
$$\fc\left[\int_t^1 f(s) ds\right](x) = \frac{1}{x}\,V(x)\qquad(x>0)$$
is positive for $\,0<x\le\pi\,$ and has an
infinity of zeros for $\,x>\pi\,$ all of which are simple and distributed as follows.

\begin{itemize}
\item[\rm(i)] If $\,f(0+) =0,$ then each of the intervals
$$\big(k\pi, (k+1/2)\pi\big),\quad k=1,2,\cdots,$$
contains exactly one zero and $V(x)$ has no positive zeros elsewhere.
\item[\rm(ii)] If $\,f(0+)>0,$ then each of the intervals
\begin{equation*}
\big((2k-1)\pi,\,2k\pi\big),\,\,\big(2k\pi,\,(2k+1/2)\pi\big),\quad k=1,2,\cdots,
\end{equation*}
contains exactly one zero and $V(x)$ has no positive zeros elsewhere.
\end{itemize}
\end{corollary}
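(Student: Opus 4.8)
The plan is to reduce the statement entirely to Theorem \ref{theorem4}. Writing $F(t) = \int_t^1 f(s)\,ds$, the integration by parts recorded in the statement gives $\fc[F](x) = V(x)/x$, so for $x>0$ the positive zeros of $V$ coincide with those of $U(x) := \fc[F](x)$, and a zero is simple for one precisely when it is simple for the other: at any zero $x_0>0$ the identity $V(x) = xU(x)$ forces $V'(x_0) = x_0 U'(x_0)$, so $V'(x_0)\ne 0 \iff U'(x_0)\ne 0$. Thus it suffices to describe the zeros of $U$, and the positivity claim on $(0,\pi]$ will follow from $V(x) = xU(x)$ once we know $U>0$ there.

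Next I would verify that $F$ meets the hypotheses of Theorem \ref{theorem4}. Since $f>0$ and $t<1$ we have $F(t)>0$ on $(0,1)$; as $f$ is convex it is continuous, so $F$ is $C^1$ with $F'(t) = -f(t)$. Hence $-F'(t) = f(t)$ is positive, increasing, convex, in the general case and integrable, which is exactly what Theorem \ref{theorem4} demands of the derivative. Consequently every conclusion of Theorem \ref{theorem4} applies to $U = \fc[F]$; in particular $U(x)>0$ for $0<x\le\pi$, whence $V(x) = xU(x)>0$ on that range.

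The crucial bookkeeping step is to evaluate the two limiting constants attached to $F$ in Theorem \ref{theorem4}. One finds $L = \lim_{t\to0+}\big[-F'(t)\big] = \lim_{t\to0+} f(t) = f(0+)$, and since $F(1-) = \int_1^1 f = 0$ one finds $M = \lim_{t\to1-}F(t) = 0$ identically. With $M=0$ the secondary hypothesis $0<2L\le 3\pi M$ of part (ii) of Theorem \ref{theorem4} can hold only when $L=0$, so the dichotomy ``$f(0+)=0$ versus $f(0+)>0$'' matches exactly the two regimes of that theorem. When $f(0+)=0$ we have $L=0$, part (ii) of Theorem \ref{theorem4} applies, and $U$ (hence $V$) has one simple zero in each $\big(k\pi,(k+1/2)\pi\big)$ and none elsewhere, which is assertion (i). When $f(0+)>0$ we have $L>0$ while $M=0$, so $0<2L\le3\pi M$ fails and only part (i) of Theorem \ref{theorem4} is available; this yields one simple zero in each of $\big((2k-1)\pi,2k\pi\big)$ and $\big(2k\pi,(2k+1/2)\pi\big)$ and none elsewhere, which is assertion (ii).

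I do not expect a genuine analytic obstacle here: the substance is carried entirely by Theorem \ref{theorem4}, and the only points requiring care are the verification that $-F' = f$ fits the theorem's hypotheses and the correct computation of $L$ and $M$. The mildly delicate observation is that $M$ vanishes automatically, which collapses the condition $0<2L\le 3\pi M$ and thereby forces the clean split according to the sign of $f(0+)$; getting this matching right is the one place where a careless argument could go astray.
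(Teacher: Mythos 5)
Your proposal is correct and is precisely the argument the paper intends: apply Theorem \ref{theorem4} to $F(t)=\int_t^1 f(s)\,ds$, noting $-F'=f$ satisfies its hypotheses, with $L=f(0+)$ and $M=F(1-)=0$, so that $M=0$ collapses the condition $0<2L\le 3\pi M$ and the dichotomy on $f(0+)$ matches the two parts of that theorem. The bookkeeping on zero correspondence and simplicity via $V(x)=xU(x)$ is also right, so there is nothing to add.
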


\begin{remark}
While part (i) is merely a restatement of P\'olya's results, as stated in (A2), Theorem A,
part (ii) coincides with Sedletskii's result \cite{Se} for which the author made use of convexity
in his proof.
\end{remark}

As an illustration, the Struve function represented by
\begin{align}
\bH_\nu(x) = \frac{2(x/2)^\nu}{\sqrt{\pi}\,\Gamma\left(\nu +1/2\right)}
\int_0^1 (1-t^2)^{\nu-1/2}\sin(xt) \,dt\label{St2}
\end{align}
has exactly one positive simple zero in each of the intervals
\begin{equation*}
\big(\pi, \,2\pi\big),\,\,\left(2\pi,\,\frac{5\pi}{2}\right),\,\,
\big(3\pi, \,4\pi\big),\,\,\left(4\pi,\,\frac{9\pi}{2}\right),\cdots
\end{equation*}
when $\,|\nu|<1/2.$  We refer to Steinig \cite{St} for more precise bounds of zeros.

\section{Remarks on Kuttner's problem}

As one of the long-standing open problems in classical analysis,
Kuttner's problem concerns the range of parameters $\,\delta>0,\,\lambda>0\,$ for which
\begin{equation}\label{K1}
\int_0^1 \left(1- t^\delta\right)^\lambda \cos xt\,dt\ge 0\qquad(x>0).
\end{equation}
While we refer to \cite{Gn}, \cite{Ku}, \cite{MR} for some of the known results, methods and generalizations,
we are interested in those outcomes obtainable from P\'olya's criteria or our improvements.
Of particular concern is the distribution of zeros in the case when \eqref{K1} fails,
which appears to be less known.

In the first place, as the function $\,f(t) = (1-t^\delta)^\lambda\,$ has derivatives
\begin{align*}
f'(t) &= -\delta\lambda (1-t^\delta)^{\lambda-1} t^{\delta-1},\\
f''(t) &= -\delta\lambda (1-t^\delta)^{\lambda-2} t^{\delta-2}\left[\delta-1-(\delta\lambda-1)t^\delta\right],
\end{align*}
it is elementary to find that $f(t)$ is decreasing for any $\,\delta>0,\,\lambda>0\,$ and that
it is convex for $\,0<\delta\le 1\le\lambda\,$ and concave for $\,0<\lambda\le 1\le \delta.$
Since $\,f(1-) =0\,$ and $f'(t)$ is in the general case unless $\,\delta =\lambda =1,$
Corollary \ref{corollary1} and Theorem B yield the following results immediately.

\begin{proposition}\label{proposition1}
For $\,\delta>0,\,\lambda>0,$ let
\begin{equation}\label{K2}
\Omega(x) = \int_0^1 \left(1- t^\delta\right)^\lambda \cos xt\,dt\qquad(x>0).
\end{equation}

\begin{itemize}
\item[\rm(i)] If $\,0<\delta\le 1\le\lambda,$ then $\,\Omega(x)\ge 0\,$ for all $\,x>0\,$ and strict
inequality holds true unless $\,\delta=\lambda =1.$
\item[\rm(ii)] If $\,0<\lambda\le 1\le \delta,\, (\delta, \lambda)\ne (1,1),$ then $\,\Omega(x)>0\,$
for $\,0<x\le\pi\,$ and has one and only one simple zero in each of the intervals
$$\big(k\pi,\,(k+1)\pi\big),\quad k=1,2,\cdots,$$
with no positive zeros elsewhere.
\end{itemize}
\end{proposition}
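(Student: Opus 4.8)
The plan is to obtain both parts by matching the structural properties of $f(t)=(1-t^\delta)^\lambda$ to the second part of Corollary~\ref{corollary1} and to Theorem~B. The preceding computations of $f'$ and $f''$ already record that $f$ is positive and strictly decreasing on $(0,1)$ with $f(1-)=0$ for every admissible pair, so the only genuine work is to fix the sign of $f''$ in each regime, to confirm that $f'$ (respectively $-f'$) lies in the general case, and to dispose of the single degenerate pair $(\delta,\lambda)=(1,1)$.

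First I would settle part~(i). In the regime $0<\delta\le 1\le\lambda$ the factor $-\delta\lambda(1-t^\delta)^{\lambda-2}t^{\delta-2}$ appearing in $f''(t)$ is negative, so $f''(t)\ge 0$ is equivalent to $\delta-1-(\delta\lambda-1)t^\delta\le 0$. Since this bracket is an affine function of the increasing quantity $t^\delta$, it is monotone in $t$, and its two endpoint values are $\delta-1\le 0$ (at $t=0$) and $\delta(1-\lambda)\le 0$ (at $t=1$); both being nonpositive, the bracket is $\le 0$ throughout and $f$ is convex. For $(\delta,\lambda)\ne(1,1)$ the derivative $f'(t)=-\delta\lambda(1-t^\delta)^{\lambda-1}t^{\delta-1}$ is a non-constant real-analytic function on $(0,1)$, hence not piecewise constant and therefore in the general case; the second part of Corollary~\ref{corollary1} then yields $\Omega(x)=U(x)>0$ for all $x>0$. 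The remaining pair $\delta=\lambda=1$ I would handle by the direct evaluation $\int_0^1(1-t)\cos xt\,dt=(1-\cos x)/x^2\ge 0$, which accounts for the failure of strict inequality.

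For part~(ii), in the regime $0<\lambda\le 1\le\delta$ the same factorization shows that $f''(t)\le 0$ is equivalent to $\delta-1-(\delta\lambda-1)t^\delta\ge 0$; the endpoint values are now $\delta-1\ge 0$ and $\delta(1-\lambda)\ge 0$, so by the same monotonicity argument the bracket is $\ge 0$ and $f$ is concave. Thus $f$ is positive, decreasing, concave, and bounded (hence integrable), and for $(\delta,\lambda)\ne(1,1)$ the function $-f'(t)=\delta\lambda(1-t^\delta)^{\lambda-1}t^{\delta-1}$ is again non-constant and in the general case. Theorem~B then applies verbatim and delivers positivity on $(0,\pi]$ together with exactly one simple zero in each interval $\big(k\pi,(k+1)\pi\big)$ and no positive zeros elsewhere.

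I expect the main obstacle to be purely bookkeeping: verifying the sign of the bracket $\delta-1-(\delta\lambda-1)t^\delta$ uniformly across each regime, in particular across the sub-case where $\delta\lambda$ crosses $1$ (so that the bracket changes its direction of monotonicity), which forces one to argue from the two endpoint values rather than from a single sign of the coefficient $\delta\lambda-1$. Once convexity and concavity are secured and the general-case condition is observed, both parts reduce to direct citations of Corollary~\ref{corollary1} and Theorem~B, with only the degenerate pair $(1,1)$ requiring the explicit computation above.
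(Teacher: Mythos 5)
Your proposal is correct and follows essentially the same route as the paper: compute $f'$ and $f''$, verify that $f(t)=(1-t^\delta)^\lambda$ is decreasing with $f(1-)=0$, convex in the regime of part (i) and concave in that of part (ii), note that $f'$ is in the general case unless $\delta=\lambda=1$, and then cite the second part of Corollary~\ref{corollary1} for (i) and Theorem~B for (ii). Your endpoint-sign analysis of the bracket $\delta-1-(\delta\lambda-1)t^\delta$ and the explicit evaluation $(1-\cos x)/x^2$ for the pair $(1,1)$ simply fill in details the paper labels as elementary.
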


\begin{figure}[!ht]
 \centering
 \includegraphics[width=320pt, height=300pt]{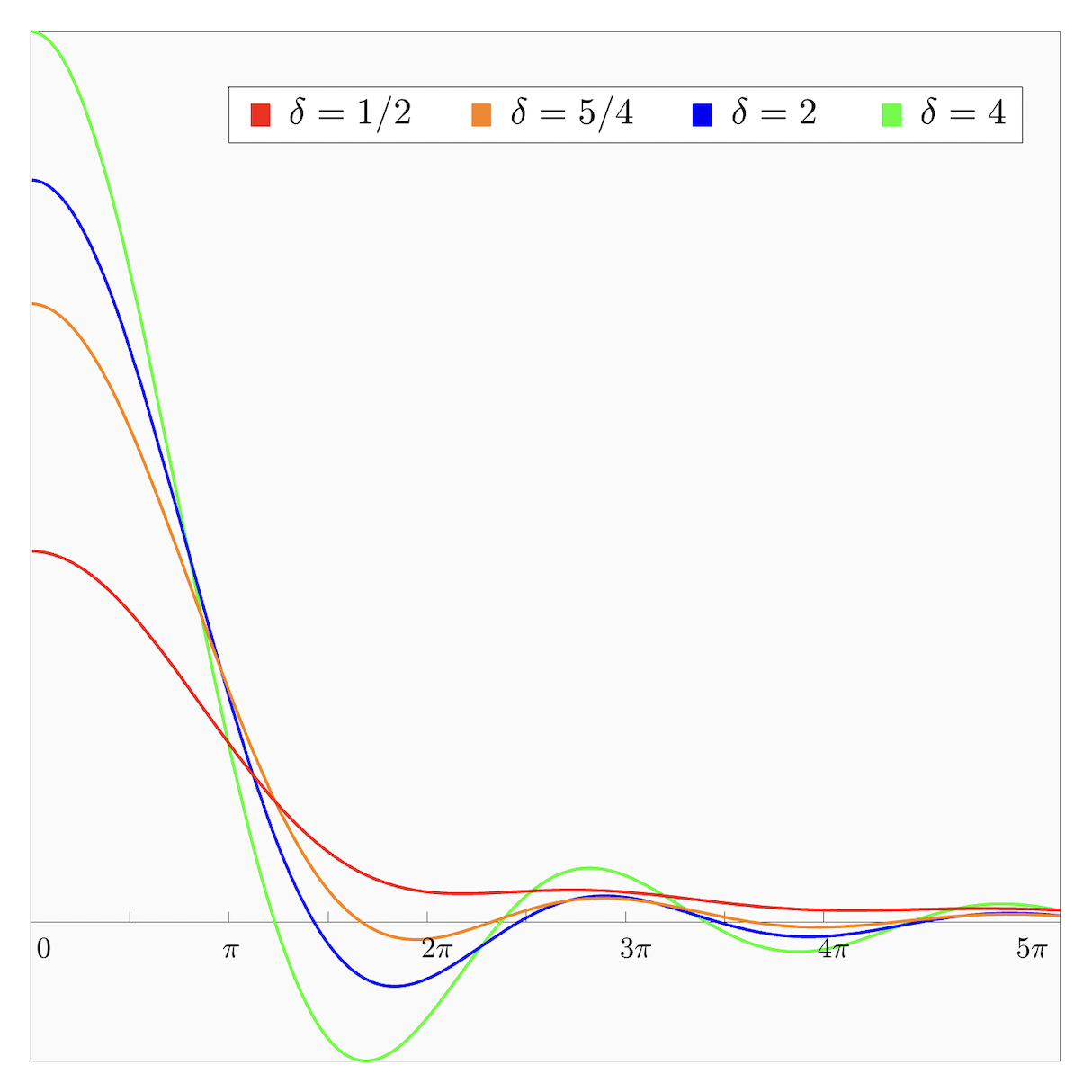}
 \caption{The graphs of $\,\Omega(x) = \int_0^1 \left(1- t^\delta\right)\cos xt\,dt\,$
 in the first quadrant. While the graph in the case $\,\delta =1/2\,$ lies above the $x$-axis,
 the graphs in other cases are oscillatory. In the particular case $\,\delta =2,$
 the zeros of $\Omega(x)$ coincide with $(\sigma_k)$, the positive roots of $\,\tan x = x.$ }
\label{Fig2}
\end{figure}

We remark that part (i) is known but part (ii) seems to be unavailable in the literature.
In the special case $\,\lambda =1,\,\delta\ge 2, $ it is still possible to improve the bounds of zeros further
in the following manner (see Figure \ref{Fig2}).

\begin{itemize}
\item If $\,\lambda =1,\,\delta\ge 2,$ then we have
$$f^{(3)}(t) = -\delta(\delta-1)(\delta-2) t^{\delta-3}\le 0,\quad f'(0+) = 0$$
so that we may apply Theorem \ref{theorem4} to find that $\Omega(x)$ has a unique zero
in each of the intervals $\,\big(k\pi,\,(k+1/2)\pi\big).$

\item If $\,\lambda =1,\,\delta>3,$ then we write
$$ 1- t^\delta = \int_t^1 s h(s) ds, \quad\text{where}\quad  h(s) = \delta s^{\delta-2},$$
and note that $h(s)$ is positive, increasing, convex for $\,0<t<1\,$ and the integral $\int_0^1 h(s)ds$ exists.
By an application of Corollary \ref{corollaryS}, we find that $\Omega(x)$ has a unique zero
in each of the intervals $\,\big(k\pi,\,\sigma_k\big).$
\end{itemize}

\begin{proposition}
If $\,\lambda =1,\,\delta>1,$ then $\Omega(x)$ has exactly one simple zero
in each of the following intervals and no positive zeros elsewhere:
\begin{align*}
&\,\,{\rm(i)} \,\,\,\, \big(k\pi,\,(k+1)\pi\big)\quad\text{for}\quad 1<\delta<2,\\
&\,{\rm(ii)}\,\,\,\,\big(k\pi,\,(k+1/2)\pi\big)\quad\text{for}\quad 2\le\delta\le 3,\\
&{\rm(iii)}\,\,\,\big(k\pi,\,\sigma_k\big)\quad\text{for}\quad \delta>3,\quad\text{where}\quad k=1,2,\cdots.
\end{align*}
\end{proposition}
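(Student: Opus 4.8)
The plan is to study $\Omega(x)=\int_0^1\big(1-t^\delta\big)\cos xt\,dt$ by partitioning $\delta>1$ into the three regimes of the statement and, in each, reducing the claim to a result proved earlier after checking the relevant positivity and convexity conditions on $f(t)=1-t^\delta$ or on an auxiliary density. Throughout, simplicity of the zeros and the phrase ``no positive zeros elsewhere'' come for free, since every result I invoke already supplies them; the entire task is to identify which prior result governs each regime.

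For $1<\delta<2$ I would cite Proposition \ref{proposition1}(ii): with $\lambda=1$ one has $0<\lambda\le 1\le\delta$ and $(\delta,\lambda)\ne(1,1)$, so a single simple zero sits in each $\big(k\pi,(k+1)\pi\big)$. No sharpening is possible on this range because $-f'(t)=\delta t^{\delta-1}$ is \emph{concave} there, its second derivative $\delta(\delta-1)(\delta-2)t^{\delta-3}$ being negative for $\delta<2$, which blocks Theorem \ref{theorem4}. For $2\le\delta\le 3$ I would apply Theorem \ref{theorem4} to $f(t)=1-t^\delta$; the point to verify is that $-f'(t)=\delta t^{\delta-1}$ is positive, increasing, convex, in the general case, and integrable. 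Positivity and monotonicity are clear for $\delta>1$, and convexity holds precisely when $\delta\ge 2$. Computing $L=\lim_{t\to 0+}\delta t^{\delta-1}=0$ puts us in the branch $L=0$ of Theorem \ref{theorem4}(ii), which places one simple zero in each $\big(k\pi,(k+1/2)\pi\big)$.

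For $\delta>3$ the sharper localization $\big(k\pi,\sigma_k\big)$ is obtained from the identity $1-t^\delta=\int_t^1 s\,h(s)\,ds$ with $h(s)=\delta s^{\delta-2}$, which is exactly the shape Corollary \ref{corollaryS} is designed to handle. The step to check is that $h$ is positive, increasing, and convex on $(0,1)$: monotonicity requires $\delta>2$ while convexity, governed by $h''(s)=\delta(\delta-2)(\delta-3)s^{\delta-4}$, requires $\delta>3$. With $h$ strictly convex, Corollary \ref{corollaryS} then yields one simple zero in each $\big(k\pi,\sigma_k\big)$, and since $\sigma_k<(k+1/2)\pi$ this refines the conclusion of Theorem \ref{theorem4}, which also remains valid here. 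The value $\delta=3$ is the natural boundary, for there $h$ degenerates to the linear function $3s$ and strict convexity is lost, so it is assigned to case (ii). I expect the only delicate bookkeeping to be pinning down these convexity thresholds at $\delta=2$ and $\delta=3$; beyond that, each regime reduces to a single citation and no new estimate is required.
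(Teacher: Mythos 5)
Your proposal is correct and coincides with the paper's own proof: case (i) is Proposition \ref{proposition1}(ii), case (ii) is Theorem \ref{theorem4} applied to $f(t)=1-t^{\delta}$ (whose $-f'(t)=\delta t^{\delta-1}$ is convex exactly for $\delta\ge 2$, with $L=0$), and case (iii) is Corollary \ref{corollaryS} applied to the decomposition $1-t^{\delta}=\int_t^1 s\,h(s)\,ds$ with $h(s)=\delta s^{\delta-2}$. The convexity thresholds you identify at $\delta=2$ and $\delta=3$ are precisely the case boundaries the paper uses, so nothing is missing.
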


\section{Main applications}
This section deals with the distribution of zeros
for the Fourier transform of the beta probability density function defined in \eqref{B0}, that is,
\begin{equation*}
f(t)= \frac{1}{B(\alpha, \beta)}\, (1-t)^{\alpha-1} t^{\beta-1},\quad 0<t<1.
\end{equation*}

For $\,x>0,$ by expanding $\cos xt$ or $\sin xt$ into ${}_0F_1$ hypergeometric series and integrating termwise,
it is elementary to represent (see \cite[13.1 (56)]{EMOT})
\begin{align}
\Phi(x) &= \int_0^1 f(t) \cos xt\,dt\nonumber\\
&= {}_2F_3\left[\begin{array}{c}
\beta/2, \,\,\,(\beta+1)/2\\1/2, \,(\alpha +\beta)/2,\,
(\alpha+\beta+1)/2\end{array}\biggr| -\frac{x^2}{4}\right],\label{B01}\\
\Psi(x) &= \int_0^1 f(t) \sin xt\,dt = \frac{\beta x}{\alpha+\beta} \nonumber\\
&\times  {}_2F_3\left[\begin{array}{c}
(\beta+1)/2, \,\,\,(\beta+2)/2\\3/2, \,(\alpha +\beta+1)/2,\,
(\alpha+\beta+2)/2\end{array}\biggr| -\frac{x^2}{4}\right].\label{B02}
\end{align}

In the recent work \cite{CC}, Cho and Chung investigated the positivity problem for the general
${}_2F_3$ hypergeometric function of the form
\begin{equation*}
{}_2F_3\left[\begin{array}{c}
a_1, a_2\\ b_1, b_2, b_3\end{array}\biggr| -x^2\right]\qquad(x>0),
\end{equation*}
where all of parameters are assumed to be positive, and obtained a list of sufficient conditions
in terms of Newton polyhedra with vertices
consisting of permutations of $\,(a_2, a_1+1/2, 2a_1)\,$ or $\,(a_1, a_2+1/2, 2a_2).$

\begin{theorem}\label{theoremCC1}
Define $\,\mathcal{P}_c\subset\mathcal{P}_s\subset\R_+^2\,$ by
\begin{align*}
\mathcal{P}_c &= \left\{(\alpha, \beta): \alpha\ge \frac 53,\,\,0<\beta\le
\min\big(1, \,\,\alpha -1\big),\,\,(\alpha, \beta)\ne (2, 1)\right\}\\
&\qquad\cup \,\left\{(\alpha, \beta): 1\le\alpha\le \frac 53,\,\,0<\beta\le \frac 23\right\},\\
\mathcal{P}_s &= \biggl\{(\alpha, \beta): \alpha\ge \frac 12,\,\,
0<\beta\le\min\biggl(2, \,\,\frac{\alpha+1}{2}, \,\,2\alpha-1\biggr),\\
&\qquad\qquad\text{and}\quad (\alpha, \beta)\ne (1, 1)\biggr\}.
\end{align*}

\begin{itemize}
\item[\rm(i)] If $\,(\alpha, \beta)\in\mathcal{P}_c,$ then $\,\Phi(x)>0\,$
for all $\,x>0.$

\item[\rm(ii)] If $\,(\alpha, \beta)\in\mathcal{P}_s,$ then $\,\Psi(x)>0\,$
for all $\,x>0.$

\item[\rm(iii)] If $\,\beta>\alpha,$ then both $\Phi(x), \Psi(x)$ change sign infinitely often.
Moreover, $\Phi(x)$ changes sign at least once when $\,\beta\le\alpha,\,\beta>1\,$
and $\Psi(x)$ changes sign at least once when $\,\beta\le\alpha,\,\beta>2.$
\end{itemize}
\end{theorem}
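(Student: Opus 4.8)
The plan is to handle the three assertions by two quite different mechanisms: parts (i) and (ii) are positivity statements for the hypergeometric representations \eqref{B01}, \eqref{B02} and constitute the analytic heart of the result, whereas part (iii) is a sign-change statement that I would extract from the large-$x$ behaviour of $\Phi,\Psi$ together with a Mellin-transform obstruction. I would dispose of part (iii) first, since it is self-contained, and treat the positivity as the main difficulty.

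For the first clause of (iii), where $\beta>\alpha$, I would analyse the endpoints of $\int_0^1 f(t)e^{ixt}\,dt$. Writing $f(t)=B(\alpha,\beta)^{-1}(1-t)^{\alpha-1}t^{\beta-1}$, the singularity $t^{\beta-1}$ at $t=0$ contributes a non-oscillatory term of order $x^{-\beta}$, while the singularity $(1-t)^{\alpha-1}$ at $t=1$ contributes an oscillatory term of order $x^{-\alpha}$; by Erd\'elyi's lemma for Fourier integrals with algebraic endpoint singularities one obtains
\begin{align*}
\Phi(x) &= \frac{\Gamma(\beta)\cos(\pi\beta/2)}{B(\alpha,\beta)}\,x^{-\beta}
+\frac{\Gamma(\alpha)}{B(\alpha,\beta)}\,\cos\!\Big(x-\tfrac{\pi\alpha}{2}\Big)x^{-\alpha}+\cdots,\\
\Psi(x) &= \frac{\Gamma(\beta)\sin(\pi\beta/2)}{B(\alpha,\beta)}\,x^{-\beta}
+\frac{\Gamma(\alpha)}{B(\alpha,\beta)}\,\sin\!\Big(x-\tfrac{\pi\alpha}{2}\Big)x^{-\alpha}+\cdots.
\end{align*}
When $\beta>\alpha$ the oscillatory $x^{-\alpha}$ term dominates both the $x^{-\beta}$ term and all lower-order remainders, so for large $x$ the sign of $\Phi$ (resp. $\Psi$) follows that of $\cos(x-\pi\alpha/2)$ (resp. $\sin(x-\pi\alpha/2)$), which reverses on every interval of length $\pi$; hence both functions change sign infinitely often.

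For the two ``moreover'' clauses I would instead use the Mellin transform, which is sharper than leading asymptotics. By the Mellin convolution theorem, $\int_0^\infty x^{s-1}\cos x\,dx=\Gamma(s)\cos(\pi s/2)$ and $\int_0^1 t^{-s}f(t)\,dt=B(\alpha,\beta-s)/B(\alpha,\beta)$ combine to give
$$\int_0^\infty x^{s-1}\Phi(x)\,dx=\Gamma(s)\cos\!\Big(\frac{\pi s}{2}\Big)\frac{B(\alpha,\beta-s)}{B(\alpha,\beta)},$$
first for $0<\Re s<1$ and then, since $\Phi(x)=O(x^{-\beta})$ at infinity when $\beta\le\alpha$, by analytic continuation throughout the strip $0<\Re s<\beta$. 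If $\Phi\ge0$ on $(0,\infty)$ the left-hand side is an absolutely convergent integral with non-negative integrand, hence $\ge0$ for every real $s$ in that strip; but for $s\in(1,\min(2,\beta))$ one has $\cos(\pi s/2)<0$ while $\Gamma(s)$ and $B(\alpha,\beta-s)$ remain positive, so the right-hand side is strictly negative. This interval is non-empty precisely when $\beta>1$, which produces the contradiction and forces a sign change (recall $\Phi(0)=1>0$). The identical argument with $\sin$ in place of $\cos$ handles $\Psi$, and since $\sin(\pi s/2)<0$ first occurs for $s\in(2,\min(4,\beta))$, the threshold becomes $\beta>2$, exactly as stated.

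The main obstacle is parts (i) and (ii), namely the strict positivity of the ${}_2F_3$ functions on $\mathcal{P}_c$ and $\mathcal{P}_s$. On the sub-region where $f$ is monotone (for instance $\beta\le1\le\alpha$, where $f$ is decreasing) the positivity of $\Psi=V$ is already furnished by Theorem \ref{theorem1}, and that of $\Phi=U$ in the decreasing convex case by Corollary \ref{corollary1}; the excluded boundary points $(2,1)$ and $(1,1)$ give the non-strict transforms $2(1-\cos x)/x^2$ and $(1-\cos x)/x$, which explains their removal. The full polyhedral regions, however, extend well past monotonicity (up to $\beta\le2$), and there I would invoke the general ${}_2F_3$ positivity machinery of Cho and Chung \cite{CC}: one writes the transform as a fractional integral against the non-negative kernel $u^{b-1}(1-u)^{\mu-1}$, which raises a lower hypergeometric parameter while preserving positivity, and thereby propagates positivity from a finite list of base cases located at the vertices of the Newton polyhedron (the permutations of $(a_2,a_1+\tfrac12,2a_1)$) throughout $\mathcal{P}_c$ and $\mathcal{P}_s$. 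Establishing those base cases, presumably through a Gasper-type sum-of-squares representation, and checking that the propagation fills exactly the stated regions, is the delicate part of the argument.
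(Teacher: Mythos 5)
Your part (iii) is correct and is a genuinely different route from the paper's: the paper obtains all of part (iii) simply by citing \cite[Theorem 9.1, (S5)]{CC}, whereas you derive it from endpoint asymptotics plus a Mellin-transform sign obstruction. The Mellin identity $\int_0^\infty x^{s-1}\Phi(x)\,dx=\Gamma(s)\cos(\pi s/2)\,B(\alpha,\beta-s)/B(\alpha,\beta)$, continued to the strip $0<\Re s<\beta$ (legitimate, since $\Phi(x)=O(x^{-\beta})$ when $\beta\le\alpha$), is indeed strictly negative for real $s\in(1,\min(2,\beta))$, hence contradicts $\Phi\ge 0$ exactly when $\beta>1$; the sine analogue on $s\in(2,\min(4,\beta))$ gives the threshold $\beta>2$. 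This is sharper than leading-order asymptotics (which would fail, e.g., when $\cos(\pi\beta/2)\ge 0$) and, modulo the routine justification of the conditionally convergent interchange of integrals, it is a valid self-contained proof of (iii).

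The genuine gap is in parts (i) and (ii), which you correctly identify as the analytic heart but do not actually prove. There are two problems. First, your plan is to rebuild the Newton-polyhedron machinery of \cite{CC}, and you explicitly leave its core (the base cases and the verification that the propagation fills the stated regions) undone; the paper instead cites \cite[Theorem 9.1, (S5)]{CC} as a black box. Second, and more importantly, even granting \cite{CC} in full, the propagation does \emph{not} fill $\mathcal{P}_c$: what \cite{CC} gives for the cosine transform is only $\Delta_c=\{\alpha>1,\ 0<\beta\le\min(1,\alpha-1),\ (\alpha,\beta)\ne(2,1)\}$, which misses the corner $\{1\le\alpha\le 5/3,\ \alpha-1<\beta\le 2/3\}$, in particular the entire segment $\alpha=1$, $0<\beta\le 2/3$. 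This corner is precisely the new content of the theorem relative to \cite{CC}, and it is obtained by a step absent from your proposal: the Askey--Szeg\"o positivity \eqref{G2} (i.e., $\int_0^x t^{-\gamma}\cos t\,dt>0$ for $1/3\le \gamma<1$, used with $\gamma=1-\beta$) settles $\alpha=1$, $0<\beta\le 2/3$, and the transference principle \cite[Proposition 2.1]{CC} (nonnegativity at $(\alpha_0,\beta_0)$ implies strict positivity for all $\alpha>\alpha_0$ with the same $\beta_0$) then yields $\mathcal{R}_c=[1,\infty)\times(0,2/3]$, so that $\mathcal{P}_c=\Delta_c\cup\mathcal{R}_c$. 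Your fallback results cannot substitute here: on this corner $f$ is decreasing but not convex when $1<\alpha<2$, and at $\alpha=1$ one has $f(1-)=\beta>0$, so Corollary \ref{corollary1} is inapplicable, while Theorem \ref{theorem1} only gives positivity of $\Psi$, not of $\Phi$.
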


\begin{proof}
All stated results are part of \cite[Theorem 9.1, (S5)]{CC} except that the
region of positivity for $\Phi(x)$ in part (i) is enlarged slightly. Indeed, it
is shown therein that $\,\Phi(x)>0\,$ for all $\,x>0\,$ when
$\,(\alpha, \beta)\in\Delta_c\subset\mathcal{P}_c$, where
$$\Delta_c = \Big\{(\alpha, \beta): \alpha>1,\,\,0<\beta\le
\min\big(1, \,\,\alpha -1\big),\,\,(\alpha, \beta)\ne (2, 1)\Big\}.$$

In the case $\,\alpha =1,$ it is pointed out in the example \eqref{G2} that
$\,\Phi(x)>0\,$ for all $\,x>0\,$ when $\,0<\beta\le 2/3.$ Setting
$\,\Phi(x) \equiv \Phi(\alpha, \beta; x),$ it is easy to see from the transference principle
established in \cite[Proposition 2.1]{CC} that if $\,\Phi(\alpha_0, \beta_0; x)\ge 0\,$ for all $\,x>0,$
then $\,\Phi(\alpha, \beta_0; x)>0\,$ for all $\,x>0\,$ with any $\,\alpha>\alpha_0.$
Consequently, $\,\Phi(x)>0\,$ for all $\,x>0\,$ when
$\,(\alpha, \beta)\in\mathcal{R}_c$, where
$\,\mathcal{R}_c = \big[1, \,\infty\big)\times \big(0,\,2/3\big],$
and since $\,\mathcal{P}_c = \Delta_c \cup \mathcal{R}_c,$ part (i) follows.
\end{proof}

\begin{remark}
More extensively, it is shown that the Fourier sine transform $\Psi(x)$ is also
positive for $\,x>0\,$ in the range
$$\alpha\ge 1/2,\quad -1<\beta\le \min\big(0, \,\,\alpha-1\big).$$
\end{remark}

We note that both Fourier transforms $\Phi(x), \Psi(x)$ are strictly positive for $\,x>0\,$
when $\,(\alpha, \beta)\in\mathcal{P}_c.$ On making use of the symmetry
\begin{equation}\label{B2}
f(\alpha, \beta; 1-t) = f(\beta, \alpha; t),\quad 0<t<1,
\end{equation}
where we temporarily put $\,f(t) = f(\alpha, \beta; t)\,$ to indicate order of parameters,
Corollary \ref{corollaryP} and Theorem \ref{theoremCC1} yield the following results at once,
what we aimed to establish in the present work.

\begin{theorem}\label{theoremCC2}
Define $\,\mathcal{P}_c^*\subset \mathcal{P}_s^*\subset \R_+^2\,$ by
\begin{align*}
\mathcal{P}_c^* &= \left\{(\alpha, \beta): 0<\alpha\le 1,\,\,\beta\ge \max \left(\frac 53,\,\, \alpha+1\right),
\,\,(\alpha, \beta)\ne (1, 2)\right\}\\
&\qquad\cup \,\left\{(\alpha, \beta): 0<\alpha\le \frac 23,\,\,1\le\beta\le \frac 53\right\},\\
\mathcal{P}_s^* &= \biggl\{(\alpha, \beta): 0<\alpha\le 2,\,\,
\beta\ge\max\biggl(\frac 12, \,\,\frac{\alpha+1}{2}, \,\,2\alpha-1\biggr)\nonumber\\
&\qquad\qquad\text{and}\quad (\alpha, \beta)\ne (1, 1)\biggr\}.
\end{align*}

\begin{itemize}
\item[\rm(i)] If $\,(\alpha, \beta)\in \mathcal{P}_c^*,$ then
$\Phi(x)$ has exactly one simple zero in each of the intervals
$\,\big((k-1/2)\pi,\,k\pi\big)\,$ and so does $\Psi(x)$
in each of the intervals
$\,\big(k\pi,\,(k+1/2)\pi\big),$ where $\,k=1,2,\cdots.$
Moreover, $\Phi(x), \Psi(x)$ have no positive zeros elsewhere and share no common zeros.

\item[\rm(ii)] If $\,(\alpha, \beta)\in \mathcal{P}_s^*\setminus \mathcal{P}_c^*,$ then
$\Phi(x)$ has exactly one simple zero in each of the intervals
$\,\big((k-1/2)\pi,\,(k+1/2)\pi\big)\,$ and so does $\Psi(x)$
in each of the intervals
$\,\big(k\pi,\,(k+1)\pi\big),$ where $\,k=1,2,\cdots.$
Moreover, $\Phi(x), \Psi(x)$ have no positive zeros elsewhere and share no common zeros.
\end{itemize}
\end{theorem}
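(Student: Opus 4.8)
The plan is to reduce everything to the positivity results of Theorem \ref{theoremCC1} via the reflection symmetry \eqref{B2}, and then to feed the outcome into Corollary \ref{corollaryP}. Writing $f(t)=f(\alpha,\beta;t)$ and denoting by $U_s,V_s$ the shifted transforms of \eqref{FS}, the identity $f(\alpha,\beta;1-t)=f(\beta,\alpha;t)$ gives
$$U_s(x)=\Phi(\beta,\alpha;x),\qquad V_s(x)=\Psi(\beta,\alpha;x),$$
so that positivity of the shifted transforms is precisely positivity of the unshifted transforms with the two parameters interchanged. This is the only structural idea required; the rest is bookkeeping together with the criteria already assembled.

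First I would verify the region matching. A direct comparison of the defining inequalities shows $(\alpha,\beta)\in\mathcal{P}_c^*$ if and only if $(\beta,\alpha)\in\mathcal{P}_c$, and likewise $(\alpha,\beta)\in\mathcal{P}_s^*$ if and only if $(\beta,\alpha)\in\mathcal{P}_s$; in particular the inclusion $\mathcal{P}_c^*\subset\mathcal{P}_s^*$ is inherited from $\mathcal{P}_c\subset\mathcal{P}_s$, and the excluded points $(1,2)$ and $(1,1)$ correspond under the swap to the excluded points $(2,1)$ and $(1,1)$ of Theorem \ref{theoremCC1}. Since the beta density integrates to $1$ and is non-constant, hence in the general case, for every admissible pair, the standing hypotheses of Corollary \ref{corollaryP} hold throughout.

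For part (i), if $(\alpha,\beta)\in\mathcal{P}_c^*$ then $(\beta,\alpha)\in\mathcal{P}_c\subset\mathcal{P}_s$, so Theorem \ref{theoremCC1}(i) and (ii) yield both $U_s(x)=\Phi(\beta,\alpha;x)>0$ and $V_s(x)=\Psi(\beta,\alpha;x)>0$ for all $x>0$. Corollary \ref{corollaryP}(iii) then places exactly one simple zero of $\Phi$ in each $\big((k-1/2)\pi,k\pi\big)$ and one of $\Psi$ in each $\big(k\pi,(k+1/2)\pi\big)$, with no others and no shared zeros. For part (ii), if $(\alpha,\beta)\in\mathcal{P}_s^*\setminus\mathcal{P}_c^*$ then $(\beta,\alpha)\in\mathcal{P}_s$ but $(\beta,\alpha)\notin\mathcal{P}_c$, so only $V_s(x)=\Psi(\beta,\alpha;x)>0$ is available; Corollary \ref{corollaryP}(i) then gives the coarser interlacing, one simple zero of $\Phi$ in each $\big((k-1/2)\pi,(k+1/2)\pi\big)$ and one of $\Psi$ in each $\big(k\pi,(k+1)\pi\big)$, again with no other zeros and none in common.

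The statement carries no genuine analytic difficulty, since all of the heavy lifting is packaged in the $\,{}_2F_3\,$ positivity of Theorem \ref{theoremCC1} and the Wronskian-based interlacing of Corollary \ref{corollaryP}. The only step demanding care is the exact matching of the defining inequalities of $\mathcal{P}_c^*,\mathcal{P}_s^*$ with the swapped inequalities of $\mathcal{P}_c,\mathcal{P}_s$, including the boundary case $\alpha=1$ and the isolated excluded points; I expect this region-by-region check, rather than anything about the zeros themselves, to be the main though entirely routine obstacle.
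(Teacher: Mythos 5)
Your proposal is correct and follows essentially the same route as the paper: the symmetry $f(\alpha,\beta;1-t)=f(\beta,\alpha;t)$ identifies $U_s,V_s$ with $\Phi(\beta,\alpha;\cdot),\Psi(\beta,\alpha;\cdot)$, the positivity of Theorem \ref{theoremCC1} (using $\mathcal{P}_c\subset\mathcal{P}_s$ for part (i)) is transferred across the swap, and Corollary \ref{corollaryP}(iii) and (i) deliver the two zero distributions. Your explicit verification that $\mathcal{P}_c^*,\mathcal{P}_s^*$ are exactly the reflections of $\mathcal{P}_c,\mathcal{P}_s$ (including the excluded points $(1,2)$ and $(1,1)$) is precisely the bookkeeping the paper leaves implicit when it says the results follow ``at once.''
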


\begin{figure}[!ht]
 \centering
 \includegraphics[width=300pt, height=300pt]{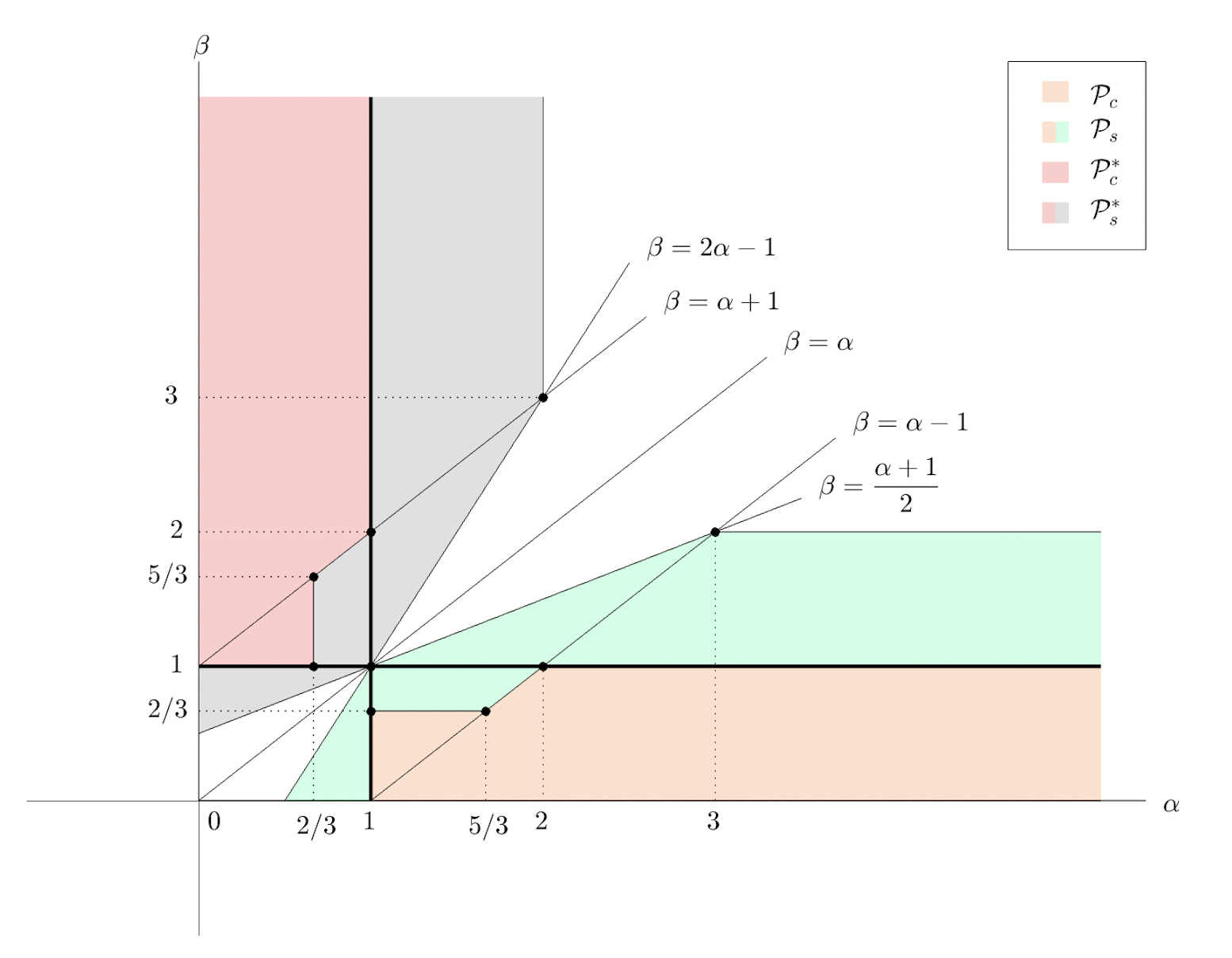}
 \caption{$\mathcal{P}_c, \mathcal{P}_s$ represent the regions of positivity and
 $\mathcal{P}_c^*, \mathcal{P}_s^*$ represent their reflections about the line $\,\beta =\alpha.$ The union of
 horizontal and vertical strips with corner at $(1,1)$, the region bounded by black-thick lines and coordinate axes,
 represents P\'olya's region of monotonicity.}
\label{Fig1}
\end{figure}

It should be emphasized that $\,\mathcal{P}_c^*, \mathcal{P}_s^*\,$
arise as the reflections of $\,\mathcal{P}_c, \mathcal{P}_s\,$ across the line $\,\alpha =\beta,$ respectively.
As shown in Figure \ref{Fig1}, each of the sets
$\,\mathcal{P}_c, \mathcal{P}_s, \,\mathcal{P}_c^*, \mathcal{P}_s^*\,$
represents an infinite polygonal region in the first quadrant.

\begin{itemize}
\item[(1)] On inspecting the first and second derivatives,
it is elementary to find that the beta density function
$f(t)$ is decreasing when $\,(\alpha, \beta)\in \mathcal{D},$
decreasing and convex when $\,(\alpha, \beta)\in \mathcal{C},$ where
\begin{align*}
\mathcal{C} &= \big\{(\alpha, \beta) : \alpha\ge 2, \,0<\beta<1\quad\text{or}
\quad \alpha>2, \,\beta =1\big\}\\
&\qquad\cup\,\big\{(\alpha, \beta) : \alpha =1, \,0<\beta<1\big\},\\
\mathcal{D} &= \big\{(\alpha, \beta) : \alpha\ge 1,\,0<\beta<1\quad\text{or}
\quad \alpha>1,\,\beta =1\big\}.
\end{align*}

Due to symmetry \eqref{B2}, since the composition with $\,t\mapsto 1-t\,$ changes monotonicity
but preserves convexity, we find that $f(t)$ is increasing when $\,(\alpha, \beta)\in \mathcal{D}^*,$
increasing and convex when  $\,(\alpha, \beta)\in \mathcal{C}^*,$ where
$\mathcal{C}^*, \mathcal{D}^*$ denote the reflections of $\mathcal{C}, \mathcal{D}$ about the line
$\,\alpha =\beta.$

In addition, it is easy to see that $f(t)$ is decreasing and concave only when
$\,1<\alpha<2,\,\beta =1.$
As readily observed, the above two theorems contain all of those results obtainable from
P\'olya's criteria except for the result that
\emph{if $\,1<\alpha<2,\,\beta =1,$ then
$\Phi(x)$ has exactly one simple zero in each of the intervals
$\,\big(k\pi,\,(k+1)\pi\big),\, k=1,2,\cdots.$}

\item[(2)] As Figure \ref{Fig1} clearly indicates, both theorems still leave many cases
of parameters unanswered on the distribution of zeros or positivity.
If $\,\beta>\alpha,\,(\alpha, \beta)\notin\mathcal{P}_s^*,$ for example,
the third part of Theorem \ref{theoremCC1} tells that both $\Phi(x), \Psi(x)$ have
an infinity of zeros but any information on the distribution of zeros is not available yet.
In the diagonal case $\,\alpha =\beta,$ however, some partial results are obtainable. In fact, since
$\,f(1-t) = f(t)\,$ in such a case, it is simple to find that
\begin{align*}
\Phi(x) &= \cos x\, \Phi(x) + \sin x\, \Psi(x),\\
\Psi(x) &= \sin x\, \Phi(x) -\cos x\, \Psi(x),
\end{align*}
which immediately imply that \emph{if $\,\alpha =\beta,$ then
$$\Phi\big((2k-1)\pi\big) =0,\quad \Psi(2k\pi) =0,\quad k=1,2,\cdots.$$}
We do not know whether $\Phi(x), \Psi(x)$ have other positive zeros.

\item[(3)] In the special case $\,\beta =1,$ \eqref{B01}, \eqref{B02} reduce to
\begin{equation}
\begin{aligned}
&{s_{\alpha-3/2, 1/2} (x) = \frac{x^{\alpha-1/2}}{\alpha-1} \int_0^1 (1-t)^{\alpha-1} \cos xt \,dt,}\\
&{s_{\alpha-1/2, 1/2} (x) = x^{\alpha-1/2} \int_0^1 (1-t)^{\alpha-1} \sin xt \,dt,}
\end{aligned}
\end{equation}
where it must be assumed that $\,\alpha \ne 1\,$ in the fist formula
and $s_{\mu, \nu}(x)$ stands for the Lommel function of the first kind
(see \cite[10.7]{Wa}).
By applying both of the above theorems and combining in an obvious way, we obtain the following
properties of Lommel functions $s_{\mu, 1/2}(x)$ in the range $\,\mu>- 3/2,\,\mu\ne \pm 1/2, $ where $\, k=1,2,\cdots.$
\emph{
\begin{itemize}
\item It has a unique simple zero in each of the following intervals
$$
\left\{\begin{aligned}
&{\big(\left(k- 1/2\right)\pi,\,k\pi\big)\quad\text{if}\quad -3/2<\mu\le - 5/6,}\\
&{\big(\left(k- 1/2\right)\pi,\,\left(k+ 1/2\right)\pi\big)\quad\text{if}\quad -5/6<\mu<- 1/2,}\\
&{\big(k\pi,\,\left(k+ 1/2\right)\pi\big)\quad\text{if}\quad - 1/2<\mu\le 1/6,}\\
&{\big(k\pi,\,(k+1)\pi\big)\quad\text{if}\quad  1/6<\mu< 1/2,}
\end{aligned}\right.
$$
and no other positive zeros.
\item It is strictly positive for $\,x>0\,$ if $\,\mu > 1/2.$
\end{itemize}}
We refer to \cite{KL}, \cite{Ko} for sharper bounds of zeros, except the case $\,- 1/2<\mu\le 1/6,$
as well as its applications to trigonometric sums and \cite{CC1}, \cite{St2} for results in other parameter cases.

\item[(4)] In the special case $\,\beta =2,$ \eqref{B02} takes the form
\begin{equation}
\Psi(x) = \frac{1}{\alpha(\alpha+1)} \int_0^1 (1-t)^{\alpha-1} t \sin xt\,dt \equiv \Psi_\alpha(x),
\end{equation}
which was investigated by Williamson \cite{Wi} for the purpose of discerning whether
Laplace transforms of $\alpha$-monotone functions are univalent or not in the right-half plane.
Our results read as follows.
\emph{
\begin{itemize}
\item $\Psi_\alpha(x)$ has a unique simple zero in each of the following intervals
$$
\left\{\begin{aligned}
&{\big(k\pi,\,(k+1/2)\pi\big)\quad\text{if}\quad 0<\alpha\le 1,}\\
&{\big(k\pi,\,(k+1)\pi\big)\quad\text{if}\quad 1<\alpha\le 3/2,}\end{aligned}\right.
$$
and no other positive zeros.
\item $\Psi_\alpha(x)$ changes sign at least once for $\,x>0\,$ if $\,3/2<\alpha<3.$
\item $\Psi_\alpha(x)$ is strictly positive for $\,x>0\,$ if $\, \alpha\ge 3.$
\end{itemize}}
We should remark that these results not only extend Williamson's positivity result for $\,\alpha =3\,$ but also
disprove his conjecture that there exists $\,\alpha'\,$ with $\,2<\alpha'<3\,$ such that
$\Psi_\alpha(x)$ remains nonnegative for $\,\alpha\ge\alpha'\,$ but changes sign for $\,0<\alpha<\alpha'\,$
(see also \cite{CC}).

\item[(5)]
In his study \cite{Sb} on the uncertainty principle of Fourier transforms, Steinerberger introduced
the sequence $(a_k)$ defined by
\begin{equation}
a_k = \F\left[\begin{array}{c}
(1+\beta)/2\\ 3/2, \,(3+\beta)/2\end{array}\biggr| - \frac{(2k-1)^2 \pi^2}{16}\right]
\end{equation}
and raised an open question on the range of parameter $\beta$ for which $(a_k)$ alternates in sign.
On recognizing
$\,a_k = S_\beta\big[(k-1/2)\pi\big],$ where
\begin{align}
S_\beta(x) &=  \F\left[\begin{array}{c}
(1+\beta)/2\\ 3/2, \,(3+\beta)/2\end{array}\biggr| - \frac{x^2}{4}\right]\nonumber\\
&= \frac{1+\beta}{x}\int_0^1 t^{\beta-1} \sin xt\,dt,
\end{align}
which corresponds to the case $\,\alpha =1\,$ of \eqref{B02}, it is plain to deduce
from both theorems and the example \eqref{G2} the following answers.

\begin{proposition}
If $\,\beta\ge 2,$ then $\,a_k>0\,$ for each odd positive integer $k$
and $\,a_k<0\,$ for each even positive integer $k$. If $\,-1<\beta\le 5/3,$ then $\,a_k>0\,$ for
every positive integer $k$.
\end{proposition}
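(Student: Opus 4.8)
The plan is to reduce everything to the sign of the single sine integral $I(x)=\int_0^1 t^{\beta-1}\sin xt\,dt$ evaluated at the points $x=(k-1/2)\pi$. Since $a_k=S_\beta[(k-1/2)\pi]$ and $S_\beta(x)=\frac{1+\beta}{x}I(x)$ with $(1+\beta)/x>0$ for $\beta>-1$ and $x>0$, the sign of $a_k$ equals the sign of $I[(k-1/2)\pi]$. For $\beta>0$ one also has $I(x)=\Psi(1,\beta;x)/\beta$, where $\Psi(1,\beta;x)$ is the Fourier sine transform of the beta density $f(1,\beta;t)=\beta t^{\beta-1}$, so in that case the sign of $a_k$ is that of $\Psi(1,\beta;(k-1/2)\pi)$. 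I would then split the argument according to whether $\beta\ge 2$ (the alternating case) or $-1<\beta\le 5/3$ (the all-positive case).

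For $\beta\ge 2$ I would exploit the reflection symmetry \eqref{B2}: for the density $f(1,\beta;t)$ the auxiliary transform $U_s(x)=\int_0^1 f(1,\beta;1-t)\cos xt\,dt$ equals $\Phi(\beta,1;x)$, the Fourier cosine transform of $f(\beta,1;t)$. When $\beta>2$ the reflected parameters satisfy $(\beta,1)\in\mathcal{P}_c$, so Theorem~\ref{theoremCC1}(i) gives $\Phi(\beta,1;x)=U_s(x)>0$ for all $x>0$; Lemma~\ref{lemma1}(i) then yields $(-1)^k\Psi[(k-1/2)\pi]<0$, i.e. $a_k>0$ for odd $k$ and $a_k<0$ for even $k$. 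The borderline $\beta=2$ falls outside $\mathcal{P}_c$ (the excluded point $(2,1)$), but there I would simply use the closed form $S_2(x)=3(\sin x-x\cos x)/x^3$; since $\cos[(k-1/2)\pi]=0$ and $\sin[(k-1/2)\pi]=(-1)^{k+1}$, one reads off $a_k=3(-1)^{k+1}/[(k-1/2)\pi]^3$ directly, recovering the same alternation.

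For $-1<\beta\le 5/3$ I would treat three subranges. If $-1<\beta<1$, then $t^{\beta-1}$ is positive and decreasing, and the positivity \eqref{G1} (which is exactly $\int_0^1 t^{-\alpha}\sin xt\,dt>0$ for $0<\alpha<2$, with $\alpha=1-\beta$) gives $I(x)>0$ for all $x>0$, hence $a_k>0$. The value $\beta=1$ is immediate from $I(x)=(1-\cos x)/x$. The essential subrange is $1<\beta\le 5/3$, where $I$ is no longer globally positive, so the reflection route fails because $\Psi(1,\beta;\cdot)$ now has genuine zeros. The device I would use is a single integration by parts at the special points: since $\beta>1$ the boundary term at $t=0$ vanishes, and because $\cos[(k-1/2)\pi]=0$ the boundary term at $t=1$ drops as well, leaving
\[
I\big[(k-1/2)\pi\big]=\frac{\beta-1}{(k-1/2)\pi}\int_0^1 t^{\beta-2}\cos\big[(k-1/2)\pi\,t\big]\,dt.
\]
Writing $t^{\beta-2}=t^{-\alpha}$ with $\alpha=2-\beta$, the condition $1<\beta\le 5/3$ is precisely $1/3\le\alpha<1$, which is the exact range in which the cosine positivity \eqref{G2} guarantees the last integral is strictly positive; hence $I[(k-1/2)\pi]>0$ and $a_k>0$.

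I expect the subrange $1<\beta\le 5/3$ to be the main obstacle, for two reasons. First, it is the only place where no global positivity statement is available, so one cannot simply invoke Theorem~\ref{theoremCC1} or Lemma~\ref{lemma1}; the pointwise evaluation at $(k-1/2)\pi$ must be used in an essential way. Second, the success of the integration-by-parts reduction hinges on the fact that the shift $\beta\mapsto\beta-1$ in the exponent, combined with the vanishing of $\cos$ at half-integer multiples of $\pi$, lands the problem exactly in the window $1/3\le 2-\beta<1$ of \eqref{G2}; this is what pins the upper bound at $5/3=2-1/3$. Verifying that the two boundary terms in the integration by parts really do vanish (the $t=0$ term needs $\beta>1$, the $t=1$ term needs the evaluation point to be a zero of cosine) is the one place where care is required, but it is routine.
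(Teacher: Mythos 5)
Your proposal is correct and is essentially the paper's own argument: the paper's proof is a one-line appeal to Theorems \ref{theoremCC1}, \ref{theoremCC2} and the example \eqref{G2}, and your expansion matches the intended route---the alternation for $\beta>2$ via positivity of the reflected cosine transform (Theorem \ref{theoremCC1}(i) combined with Lemma \ref{lemma1}(i), which is exactly the mechanism behind Theorem \ref{theoremCC2}(i)), positivity for $-1<\beta\le 1$ via \eqref{G1}, and the crucial range $1<\beta\le 5/3$ via integration by parts at the zeros of $\cos x$ so that \eqref{G2} applies with $\alpha=2-\beta\in[1/3,1)$. Your separate treatment of the borderline case $\beta=2$ (whose reflected point $(2,1)$ is excluded from $\mathcal{P}_c$) by the closed form is a necessary detail that the paper's one-liner glosses over, and you handle it correctly.
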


While the case $\,5/3<\beta<2\,$ is still inconclusive, it should be pointed out
that Steinerberg himself proved the case when
$\,\beta\in\big\{ 2, 3, 4, 5, 6\big\}\,$ by elementary computations.

\end{itemize}

\bigskip

\textsc{Acknowledgements}. We are grateful to Seok-Young Chung for his help
in translating P\'olya's original paper into English and for sharing his insightful ideas on the present project with us. 
Yong-Kum Cho is supported by
the National Research Foundation of Korea funded by 
the Ministry of Science and ICT (2021R1A2C1007437). Young Woong Park
is supported by
the Chung-Ang University Graduate Research Scholarship in 2021.

\end{document}